\pgfplotsset{%
    layers/standard/.define layer set={%
        background,axis background,axis grid,axis ticks,axis lines,axis tick labels,pre main,main,axis descriptions,axis foreground%
    }{
        grid style={/pgfplots/on layer=axis grid},%
        tick style={/pgfplots/on layer=axis ticks},%
        axis line style={/pgfplots/on layer=axis lines},%
        label style={/pgfplots/on layer=axis descriptions},%
        legend style={/pgfplots/on layer=axis descriptions},%
        title style={/pgfplots/on layer=axis descriptions},%
        colorbar style={/pgfplots/on layer=axis descriptions},%
        ticklabel style={/pgfplots/on layer=axis tick labels},%
        axis background@ style={/pgfplots/on layer=axis background},%
        3d box foreground style={/pgfplots/on layer=axis foreground},%
    },
}
\pgfplotsset{compat=1.18}
\newtheoremstyle{break}%
{}{}%
{\itshape}{}%
{\bfseries}{.}%
{\newline}{}
\theoremstyle{break}
\newtheorem{breaktheorem}{Theorem}
\newtheorem{breakproposition}[breaktheorem]{Proposition}
\theoremstyle{plain}
\newtheorem{proposition}[breaktheorem]{Proposition}
\newtheorem{lemma}[breaktheorem]{Lemma}
\newtheorem{cor}[breaktheorem]{Corollary}
\theoremstyle{definition}
\newtheorem{definition}[breaktheorem]{Definition}
\newtheorem{example}[breaktheorem]{Example}
\newtheorem{remark}[breaktheorem]{Remark}
\theoremstyle{remark}
\renewcommand{\H}{\mathcal{H}}
\newcommand{\id}{\text{id}}
\newcommand{\R}{\mathbb{R}}
\title{Non-Singularity of the Gradient Descent Map for Neural Networks with
Piecewise Analytic Activations}
\author{%
	Alexandru Cr\u{a}ciun \\
	Technical University of Munich\\
	\texttt{a.craciun@tum.de} \\
	\And
	Debarghya Ghoshdastidar\\
	Technical University of Munich\\
	Munich Data Science Institute\\
	Munich Center for Machine Learning\\
	\texttt{ghoshdas@cit.tum.de} \\
}
\begin{document}

\maketitle

\begin{abstract}
	The theory of training deep networks has become a central question of modern
	machine learning and has inspired many practical advancements. In
	particular, the gradient descent (GD) optimization algorithm has been
	extensively studied in recent years. A key assumption about GD has appeared
	in several recent works: the \emph{GD map is non-singular} --- it preserves
	sets of measure zero under preimages. Crucially, this assumption has been
	used to prove that GD avoids saddle points and maxima, and to establish the
	existence of a computable quantity that determines the convergence to global
	minima (both for GD and stochastic GD). However, the current literature
	either assumes the non-singularity of the GD map or imposes restrictive
	assumptions, such as Lipschitz smoothness of the loss (for example,
	Lipschitzness does not hold for deep ReLU networks with the cross-entropy
	loss) and restricts the analysis to GD with small step-sizes. In this paper,
	we investigate the neural network map as a function on the space of weights
	and biases. We also prove, for the first time, the non-singularity of the
	gradient descent (GD) map on the loss landscape of realistic neural network
	architectures (with fully connected, convolutional, or softmax attention
	layers) and piecewise analytic activations (which includes sigmoid, ReLU,
	leaky ReLU, etc.) for almost all step-sizes. Our work significantly
	extends the existing results on the convergence of GD and SGD by
	guaranteeing that they apply to practical neural network settings and has
	the potential to unlock further exploration of learning dynamics.
\end{abstract}

\section{Introduction}
\label{section:introduction}

Training deep neural networks involves optimizing highly non-convex loss
functions over high-dimensional parameter spaces, a process that poses
significant theoretical and practical challenges. Among these are the risks of
gradient descent (GD) converging to saddle points or poor local minima, which
could hinder the performance of trained models, especially when solutions
associated with worst-case saddle points are significantly worse than those
associated with worst-case local minima \citep{dauphin2014, pascanu2014}. Recent
theoretical advances have shed light on the dynamics of GD in such landscapes.
One key result demonstrates that GD avoids converging to saddle points or
maxima for almost all initializations, provided certain
conditions are satisfied \citep{lee2019}. This supports empirical observations
that GD tends to converge to local minima rather than get trapped in poor
solutions, at least in the idealized setting in \citet{lee2019}. Another line of
research has explored the stability of minima, introducing Lyapunov
exponent-like quantities that characterize whether GD converges to a given
minimum from nearby initializations \citep{chemnitz2024}. These findings offer critical
insights into why optimizing using GD often succeeds in practice despite the
complexity and non-convex nature of the neural network loss.

\begin{figure}[ht!]
	\begin{center}
		\begin{tikzpicture}[xscale=1.7,yscale=0.4, ultra thick]
			\tikzmath{
				real \x1, \y1, \x2, \y2, \xc, \yc;
				\x1 = 2.1;
				\y1 = \x1^2;
				\x2 = 2.7;
				\y2 = \x2^2;
				\xc = 0;
				\yc = 8;
				\l = -0.3;
				\r = 3.0;
				function loss(\x){
					if \x < -2 then { 
						return (\x)^2; 
					} else {
						if \x < 2 then { 
							return {0.5*(\x)^4-3*(\x)^2+8}; 
						} else {
							return (\x)^2; 
						}; 
					};
				};
				{
					\draw[domain=\l:\r,color=blue, samples=100] plot (\x, {loss(\x)});
					\draw[domain=\x1:\x2,color=red]	plot (\x, {loss(\x)});
				};
			}

			\node (e) at (0.5, 5.0) {$\textcolor{red}{\eta=0.5}$};
			\draw[color=red, fill=red] (\xc, \yc) ellipse (1pt and 4pt);
			\draw[color=red, fill=red] (\x1, \y1) ellipse (1pt and 4pt);
			\draw[color=red, ->] (\x1, \y1) -- (\xc + 0.1, \yc);
			\node (a) at (1.5, 6.4) {$\textcolor{red}{G_\eta}$};
			\draw[color=red, fill=red] (\x2, \y2) ellipse (1pt and 4pt);
			\draw[color=red, ->] (\x2, \y2) -- (\xc + 0.1, \yc);
		\end{tikzpicture}
		\qquad
		\begin{tikzpicture}[xscale=1.7,yscale=0.4, ultra thick]
			\tikzmath{
				\l = -0.3;
				\r = 3.0;
				function loss(\x){
					if \x < -2 then { 
						return {(\x)^2}; 
					} else {
						if \x < 2 then { 
							return {0.5*(\x)^4-3*(\x)^2+8}; 
						} else {
							return {(\x)^2}; 
						}; 
					};
				};
				{\draw[domain=\l:\r,color=blue, samples=100] plot (\x, {loss(\x)});};
				function grad(\x, \e){
					if \x < -2 then { 
						return \x - 2*\e*\x; 
					} else {
						if \x < 2 then { 
							return {\x - \e*(2*(\x)^3-6*(\x))}; 
						} else {
							return \x - 2*\e*\x; 
						}; 
					};
				};
				real \x1, \x2;
				\x1 = 2.1;
				\x2 = 2.7;
				\et = 0.25;
				\xo = \x1;
				\yo = loss(\xo);
				\xn = 0.0;
				\yn = 0.0;
				{ \draw[domain=\x1:\x2,color=red]	plot (\x, {loss(\x)}); }
				{ \draw[color=red, fill=red] (\xo, \yo) ellipse (1pt and 4pt); };
				for \i in {1,...,1} {
					\xn = grad(\xo, \et);
					\yn = loss(\xn);
					{ \draw[color=red, ->] (\xo, \yo) -- (\xn, \yn); };
					\xo = \xn;
					\yo = \yn;
				};
				\xo = \x2;
				\yo = loss(\xo);
				\xn = 0.0;
				\yn = 0.0;
				{ \draw[color=red, fill=red] (\xo, \yo) ellipse (1pt and 4pt); };
				for \i in {1,...,1} {
					\xn = grad(\xo, \et);
					\yn = loss(\xn);
					{ \draw[color=red, ->] (\xo, \yo) -- (\xn, \yn); };
					\xo = \xn;
					\yo = \yn;
				};
				\x1 = grad(\x1, \et);
				\x2 = grad(\x2, \et);
				{ \draw[domain=\x1:\x2,color=red]	plot (\x, {loss(\x)}); };
			}
			\node (e) at (0.5, 5.0) {$\textcolor{red}{\eta=0.25}$};
			\node (a) at (1.7, 6.4) {$\textcolor{red}{G_\eta}$};
		\end{tikzpicture}
	\end{center}
	\caption{Illustrating the difference between singular and non-singular GD
		maps (different step-sizes). The loss used is $ L(\theta) =
		0.5\theta^4-3\theta^2+8 \text{ if } -2 \leq \theta \leq 2; \theta^2
		\text{ otherwise}$. Left: for $ \eta=0.5 $, the GD map is
		\emph{singular}. The red interval gets mapped to a single point after
		one iteration. Right: for $ \eta = 0.25 $, the GD map is
		\emph{non-singular}. The same interval is mapped not to a point, but to
		an interval.
	}\label{fig:ill}
\end{figure}
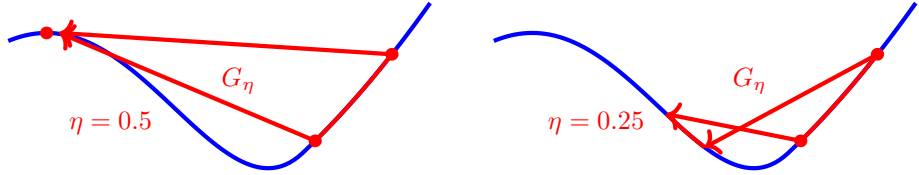

Both of the analyses rely on the fact that the GD map --- defined as $
G_\eta(\theta) = \theta - \eta \nabla L(\theta) $, where $ L $ is the loss
function and $ \eta > 0$ is the step-size --- is non-singular. A map $ G $ is
\emph{non-singular} if the preimage of any set of measure zero under $ G $ also
has measure zero; maps not having this property are called \emph{singular}.
Non-singularity of the GD map ensures that pathological behaviors (e.g.,
convergence to undesirable points) occur only on negligible sets. An example of
the difference between the two concepts can be seen in Figure~\ref{fig:ill},
which illustrates the effect of performing one optimization step. While
non-singularity is relevant to many theoretical guarantees (such as the two
works mentioned above), its validity in the context of neural networks has not
been put on a solid footing.  If the neural network uses an analytic activation
function, it can be shown using only the standard tools of analysis that the
gradient descent map is non-singular. However, if one employs a strictly
piecewise analytic activation function like ReLU, the standard tools no longer
work and a novel approach is needed.

In this paper, we prove that the (stochastic) GD map is non-singular for almost all
step-sizes $ \eta $ for neural networks using piecewise analytic activations
and a number of different architectures, for example using fully connected,
convolutional, or softmax attention layers, thereby validating the assumptions
underpinning prior optimization theories in the context of practical neural
network training. Applying the standard tools no longer works since the
composition of two almost everywhere analytic functions need not be almost
everywhere analytic itself (cf.\ Remark~\ref{ex:crf}). However, we can make use
of the layered nature of a neural network to prove that an analogue to the
chain rule holds for the neural network function
(Proposition~\ref{prop:ff-fcaea}). This result explains why the function is
well-behaved for all parameter values except for a negligible set. With this
insight, we extend the technique used to prove the non-singularity of the GD map
in the case of neural networks with \emph{strictly} analytic activations to
networks using \emph{piecewise} analytic activations (such as ReLU).  Thus, our
main result is:

\begin{breaktheorem}[\bf Stochastic Gradient Descent Map for Neural Networks is Non-Singular]
	\label{thm:main}
	Consider a deep neural network that consists of fully connected,
	convolutional, or attention layers and let the non-linear
	activations in the layers be piecewise analytic. Additionally, fix any data
	and any analytic loss function. Then, for almost all step-sizes $ \eta $,
	any (S)GD map $G_{\eta}$ is non-singular.
\end{breaktheorem}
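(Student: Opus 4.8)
The plan is to reduce the piecewise‑analytic case to the strictly analytic one, region by region, using Proposition~\ref{prop:ff-fcaea} to tame the composition pathology flagged in Remark~\ref{ex:crf}. The analytic building block is the following elementary fact: if $\Phi\colon V\to\R^{p}$ is real‑analytic on a connected open $V\subseteq\R^{p}$ and $\det D\Phi\not\equiv 0$, then $\Phi$ is non‑singular. Indeed, $\{\det D\Phi=0\}$ is then a proper analytic subvariety, hence Lebesgue‑null; on its open complement the inverse function theorem makes $\Phi$ a local diffeomorphism, so by second countability that complement is covered by countably many balls on which $\Phi$ is bi‑Lipschitz, and bi‑Lipschitz maps pull null sets back to null sets. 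Writing $\Phi^{-1}(N)\subseteq\{\det D\Phi=0\}\cup\bigcup_k\bigl(\Phi|_{B_k}\bigr)^{-1}(N)$ for a null $N$ then exhibits $\Phi^{-1}(N)$ as a countable union of null sets.

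First I would dispatch the strictly analytic case to fix notation. If $L$ is analytic, set $F(\eta,\theta)=\det\!\bigl(I-\eta\,\nabla^{2}L(\theta)\bigr)$, which is jointly analytic on $\R\times V$ with $F(0,\theta)\equiv 1$, hence $F\not\equiv 0$ (equivalently: for each fixed $\theta$, $F(\cdot,\theta)$ is a polynomial in $\eta$ of degree $\le p$ with constant term $1$, so never the zero polynomial). Thus the zero set of $F$ is Lebesgue‑null in $\R\times V$, and by Fubini $F(\eta,\cdot)\not\equiv 0$ for almost every $\eta$; for such $\eta$ the map $G_\eta=\id-\eta\nabla L$ satisfies $\det DG_\eta=F(\eta,\cdot)\not\equiv0$ and the building block above gives non‑singularity.

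Now for the general case I would invoke Proposition~\ref{prop:ff-fcaea}: the loss $L=\ell\circ\Phi_{\mathrm{NN}}$ (analytic loss $\ell$ composed with the network map) obeys the chain‑rule analogue, so the parameter space decomposes as $\R^{p}=S\cup\bigcup_{j\in J}U_j$ with $J$ countable, each $U_j$ open, $S$ Lebesgue‑null, and $L|_{U_j}$ agreeing with a function $L_j$ that is real‑analytic on a connected open $V_j\supseteq U_j$ (on $S$ we may define $G_\eta$ by any measurable subgradient selection — harmless since $S$ is null). For each $j$ let $F_j(\eta,\theta)=\det\!\bigl(I-\eta\,\nabla^{2}L_j(\theta)\bigr)$; exactly as above $F_j$ is jointly analytic with $F_j(0,\cdot)\equiv1$, so its zero set is null and the set $E_j$ of step‑sizes $\eta$ for which $\{\theta\in V_j:F_j(\eta,\theta)=0\}$ fails to be null is itself null. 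Put $E=\bigcup_{j}E_j$, a null set; for the SGD map adjoin the finitely many analogous sets coming from the finitely many mini‑batch losses. Fix any $\eta\notin E$ and any null $N$. Then $G_\eta^{-1}(N)=\bigl(G_\eta^{-1}(N)\cap S\bigr)\cup\bigcup_j\bigl(G_\eta^{-1}(N)\cap U_j\bigr)$; the first piece lies in $S$ and is null, while on $U_j$ the map $G_\eta$ coincides with the analytic $G_{\eta,j}:=\id-\eta\nabla L_j$ on $V_j$, whose Jacobian $\det DG_{\eta,j}=F_j(\eta,\cdot)$ is not identically zero on any component of $V_j$ (since $\eta\notin E_j$), so the building block makes $G_{\eta,j}$ non‑singular and $G_\eta^{-1}(N)\cap U_j\subseteq G_{\eta,j}^{-1}(N)$ is null. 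A countable union of null sets being null, $G_\eta^{-1}(N)$ is null, i.e.\ $G_\eta$ is non‑singular, for all $\eta$ outside the null set $E$.

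\textbf{Main obstacle.} Within this argument the real work is entirely upstream, in Proposition~\ref{prop:ff-fcaea}: producing the decomposition $\R^p=S\cup\bigcup_jU_j$ together with the crucial \emph{global analytic extension} $L|_{U_j}=L_j|_{U_j}$ across all the network's layers, precisely because (Remark~\ref{ex:crf}) composing two almost‑everywhere analytic maps can manufacture a positive‑measure bad set, so one cannot simply iterate the single‑layer picture. Once that is in hand, the remaining steps are bookkeeping — checking that the "for almost all $\eta$" conclusions survive the countable union over $j$ and the finite union over mini‑batches, that the region boundaries are absorbed into the null set $S$, and that the choice of $\nabla L$ on $S$ is immaterial.
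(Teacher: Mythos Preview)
Your argument is correct and follows the same overall architecture as the paper: establish that the loss is almost everywhere analytic (via Proposition~\ref{prop:ff-fcaea} and the composition lemma), show non-singularity on each connected open piece where $L$ is analytic, and glue using that the singular set is null. The paper's Corollary~\ref{cor:gd_on_ae} is precisely your final decomposition $G_\eta^{-1}(N)=(G_\eta^{-1}(N)\cap S)\cup\bigcup_j(G_\eta^{-1}(N)\cap U_j)$.

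Where you genuinely diverge is in the strictly analytic step. The paper splits into cases: if all eigenvalues of the Hessian are constant on a component, only their finitely many reciprocals are bad; if some eigenvalue is nonconstant, it invokes Kato's analytic parametrization of eigenvalues along an analytic path to exhibit, for \emph{every} $\eta$, a point where $\det(I-\eta H_L)\neq 0$. Your route---observe $F(0,\theta)\equiv 1$, so $F\not\equiv 0$ as a joint analytic function on $\R\times V_j$, then Fubini---is shorter and sidesteps Kato entirely. The price is that you get only a null exceptional set of step-sizes per component rather than the at-most-countable set the paper's argument yields, but the theorem only claims ``almost all $\eta$,'' so nothing is lost.

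One small over-elaboration: you do not need an analytic extension $L_j$ to a strictly larger $V_j\supsetneq U_j$. Taking $U_j$ to be the connected components of $D(L)$ and $V_j=U_j$ already suffices, and this is all Proposition~\ref{prop:ff-fcaea} and its corollary actually deliver---a.e.\ analyticity, not a piecewise structure with ambient analytic extensions. Your ``main obstacle'' paragraph therefore slightly overstates what must be extracted upstream; the genuine work in Proposition~\ref{prop:ff-fcaea} is showing $S(f_D)$ is null despite the failure of the naive chain rule, not producing extensions.
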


This contribution bridges a significant gap between theoretical optimization
literature and practical neural network training. By establishing the
non-singularity of the GD map, we extend the applicability of results on
saddle-point and maxima avoidance \citep{lee2016, panageas2016, lee2019,
cheridito2022, panageas2025} and on the stability of minima \citep{wu2018,
ma2021, ahn2022, chemnitz2024} to realistic deep learning settings, offering a
rigorous explanation for the empirical success of GD. Of particular importance
is the fact that the GD map is non-singular irrespective of the number of data
points, hence theoretical results on the optimization properties of stochastic
GD (SGD), such as those in~\cite{chemnitz2024} can also be extended to realistic
deep learning scenarios using our framework.

\subsection{Related Works}
\label{section:related_works}
Understanding the optimization dynamics of neural networks has been a central
focus of machine learning research, blending classical mathematical tools with
domain-specific insights. A well-known result in this area is Rademacher’s
Theorem \citep{rockafellar1998}, which states that locally Lipschitz functions
are differentiable almost everywhere. While this provides a general framework
for studying smoothness, it does not directly address the unique structure of
neural network losses, which are influenced by architectures and
activation functions, nor does it provide more refined knowledge on the GD
map (such as non-singularity), which is usually assumed in studying
optimization.

Recent studies have focused on the piecewise nature of activation functions, such
as ReLU, to better to understand how the layered structure of a deep neural
network tessellates the input space into affine-linear regions as the parameters
vary \citep{montufar2014, balestriero2019, balestriero2020}. Software to
efficiently visualize these regions has also been developed \citep{humayun2023}.
These works have shed light on the geometric properties of the network function
from the input space for fixed parameters: they estimate the number of affine
linear regions of the resulting function and how this changes as the weights and
biases of the network change. However, from an optimization point of view, one
is interested in understanding the loss landscape. That is, for fixed data, one
wants to know how the piecewise nature of the activation
function splits the parameter space into different regions, separated by
boundaries where the loss function is not differentiable.

In optimization theory, \citet{panageas2016, lee2019} showed that first-order
methods like GD almost always avoid saddle points under strict assumptions such
as restricting the step-size to small values and requiring the loss function to
be Lipschitz smooth. These are used to prove that the GD map is non-singular,
which in turn is used to show that the set of initializations that converge to a
saddle point has measure zero (if the set of saddle points has measure zero
itself). \citet{khromov2023} have empirically investigated
how the Lipschitzness of neural networks as functions from the input space
changes during training and present bounds for a number of architectures and
datasets.  However, it is the Lipschitzness of the map from the parameter space
which is needed to prove that GD avoids saddles and maxima. This condition is
almost never true in neural networks (for example, it does not hold for deep
ReLU networks with cross entropy as loss).

Ideas relating to a heuristic notion of stability of minima for GD and SGD have
been explored in \citet{wu2018, ma2021}, where they conducted an investigation
of how the batch-size and the step-size influence stability. \citet{wu2018}
train a model using GD until convergence and then switch to SGD, which, most of
the time, converges to a different minimum (that generalizes better).
Theoretical works such as \citet{ahn2022} and \citet{chemnitz2024} start by
assuming that the GD map is non-singular and use this to show that a notion of
stability can be defined for minima. They conclude that stable minima attract
nearby initializations, while unstable ones repel almost all starting points.
The main contribution of \citet{chemnitz2024} is to extend the analysis of
stability for GD to the stochastic setting, which is a non-trivial procedure.
We mention that all the works referenced in this paragraph treat generic
parametric models, not specializing to neural networks.  Their assumption, that
the gradient descent map is non-singular, while plausible, lacks rigorous
justification in the neural network context, especially if the activation
function is strictly piecewise analytic (such as ReLU, GELU, leaky ReLU, etc.).
We also mention that stability has been empirically observed to be related to
generalization ability \citep{hochreiter1997}.

Closest to this paper is the work of \citet{riekert2022, riekert2022conv,
riekert2023}. While their work is more concerned with the convergence properties
of gradient descent and gradient flow, they show that the loss function is
continuously differentiable almost everywhere for deep neural networks using
only ReLU as activation function under the assumption that the underlying
function generating the data is polynomial. Our work expands on their results by
allowing for any piecewise analytic activation function (including, but not
limited to, sigmoid or ReLU), a broader class of network architectures (including
convolutional and networks using attention layers), as well as a more general
learning setting (we do not impose any restrictions on the data-generating
process).

Our work builds on and extends these efforts by providing a formal proof that
the gradient descent map is non-singular for neural networks with piecewise
analytic activations, such as sigmoid, tanh, or ReLU. We show that the loss
function is analytic almost everywhere and that the GD map
preserves the measure zero property under preimages for almost all step-sizes. This
result not only validates the assumptions in prior works but also establishes a
stronger theoretical foundation for understanding optimization in deep learning,
distinguishing our contribution from earlier heuristic analyses.

\subsection{Organization of the Paper}
In Section~\ref{section:setting} we introduce the optimization task,  different
neural network architectures, and the (stochastic) gradient descent algorithm.
In Section~\ref{section:loss_func_analytic} we prove that the empirical loss
function is almost everywhere analytic for neural networks with piecewise
analytic activations. Next, in Section~\ref{section:gd_map_non_sing} we
establish the non-singularity of the gradient descent map, and show why some
values of the step-size should be avoided. Finally, in
Section~\ref{section:discussion} we discuss the implications of this work,
provide further illustrations and examples, and highlight directions for future
research.

\section{Setting}
\label{section:setting}

\par{\bf Notation and Conventions.}
Throughout this paper, we will only use the Lebesgue measure. Additionally,
we use the abbreviation a.e.\ to mean almost everywhere.  For a function $
f:\R^m \to \R^n $ we denote by $ Z(f) = \{ x \in \R^m | f(x) = 0 \} $ the set of
points where $ f $ is zero and if $ U \subset \R^m $ is a set, we denote by $
\partial U $ its topological boundary. For a finite set $ I $, $ |I| $ denotes the
number of elements in $ I $.

\par{\bf Supervised Learning.}
In the following, we consider a generic supervised learning problem. Let $ F:
\R^{n_{\theta}}\times \R^{n_0} \to \R^{n_D} $ be a \emph{parametric} model,
that is, we consider the first space in the Cartesian product of the domain of $ F $
to be parameters. For each parameter $ \theta \in \R^{n_\theta} $ we get a
model that associates to an input $ x \in \R^{n_0} $ an output vector $
f_{\theta} (x) = F(\theta, x) \in \R^{n_D} $. 

Given training data $ \left( (x_i, y_i)\right)_{i=1}^m \subset \R^{n_0} \times
\R^{n_D} $ and a loss function $ l:\R^{n_D}\times \R^{n_D} \to \R $, the goal is
to find a set of parameters $ \theta^* \in \R^{n_\theta} $ that minimizes the
empirical loss
\begin{equation} 
	\label{eq:loss}
	L(\theta) = \frac{1}{m} \sum_{i=1}^m l(y_i, F(\theta, x_i)).
\end{equation}

\par{\bf Neural Networks.}
Given a positive integer $ D $ called the \emph{depth} of the neural network, a
sequence $ N = (n_0, \ldots, n_D) $ of positive integers called the
\emph{widths}, and a sequence of piecewise analytic functions $ \Sigma =
\{\sigma_d:\R \to \R\}_{d=1}^{D} $ called
the \emph{activation functions}, we denote by $ \H_{H,N,\Sigma} $ the hypothesis
class of \emph{feed-forward fully connected neural networks} (FF-FC) with a fixed
depth and widths, and with piecewise analytic activation functions. These 
networks are the simplest and serve as a starting point for the
more complicated architectures we will deal with.

We now describe how such a network works. An FF-FC network $ f_\theta $ is given as
\[ 
	f_\theta(x) = \left(f^{ \left(D\right)} \circ \cdots \circ f^{
	\left(1\right)}\right)(x).
\]
Here $ \theta = (W_1, b_1, \ldots, W_D, b_D)$ denotes the collection of
parameters from each layer, where $ b_i \in \R^{n_i} $ and $ W_i \in \R^{n_i
\times n_{i-1}} $ are called the \emph{bias} and \emph{weights} for that layer, respectively; we
let $ n_\theta = n_0n_1 + \ldots + n_{D-1}n_D + n_1 + \ldots + n_D$ denote the
total number of parameters.

The $ d $-th network layer $ f^{ \left(d\right)} $ takes as input a vector $ z^{
 \left(d-1\right)} $, the output of the previous network layer, and acts as follows:
 \[ 
 	f^{ \left(d\right)} (z^{ \left(d-1\right)}) = \sigma_d (W_d z^{\left(d-1\right)} + b_d),
 \]
where we apply the activation function $ \sigma_d $ entry-wise.

In the rest of the paper, we will also be referring to convolutional or softmax
attention layers. Such layers have been developed to solve specific tasks such
as image recognition and natural language interpretation. A convolutional layer
performs a convolution on the output from the previous layer using some kernel.
Attention layers use a query, key, value system in order to find relevant tokens
and better embeddings for sequential data. Deeper network architectures can be
obtained by stacking together different types of layers.  This setting is
applicable to the transformer architecture~\citep{vaswani2017} that interleaves
convolutional and attention layers, followed by a fully connected network.
Please refer to Appendix~\ref{section:architectures} for more details on
different types of architectures and specialized layers.

\par{\bf Bias is zero.}
We mention here that for the rest of this paper, we set the bias terms in each
layer equal to zero. This is done for clarity of exposition as dealing with the
bias terms can be done in a straightforward way (cf.\ Remark~\ref{rem:drop-bias}
in Appendix~\ref{section:proof}), but only makes the exposition harder to follow.

\par{\bf Stochastic GD (SGD).} Gradient descent (GD) is defined as the map $
G_\eta(\theta) = \theta - \eta\nabla L(\theta)$ for $ \eta > 0 $. For GD, we
initialize the parameters $ \theta_0 $ according to a probability distribution
on $ \R^{n_\theta} $ and update them using the rule $ \theta_{i+1} =
G_\eta(\theta_i) = \theta_i - \eta\nabla L(\theta_i)$, where the loss is the one
in~\eqref{eq:loss}.  For stochastic GD (SGD), at each step $ i $ we use only a
sub-sample of $ 0<n<m $ data points to compute the empirical loss.  Hence, the
new point is $ \theta_{i+1} = G_{\eta, i} (\theta_i) = \theta_i - \eta\nabla
L_i(\theta_i) $, where we randomly choose $ i_1, \ldots, i_n \in [m], i_j \neq
i_k $ if $ j \neq k $, and the empirical loss computed on a mini batch is 
\[ 
	L_i(\theta) = \frac{1}{n}\sum_{j=1}^n l(y_{i_j}, F(\theta, x_{i_j})).
\]

\section{Analyticity of the Loss Function}
\label{section:loss_func_analytic}
We now see that if the activation function of the network is analytic and the
loss function is analytic as well, the empirical loss function is also analytic.
This follows from the usual rules of calculus. It is no longer as obvious if
that is still the case when the activation function is only piecewise analytic.
The goal of this section is to show that the loss function for a neural network
using piecewise analytic activations is itself analytic on a ``big enough'' set.
To achieve this, we start by defining precisely what we mean by a piecewise
analytic function and extend this definition from univariate to multivariate
functions. Next, we highlight why a novel approach is needed and then show how
we can leverage the layered structure of the neural network to derive
analyticity guarantees. After the network function is properly understood, we
finish by showing that the empirical loss function is analytic on a ``big
enough'' set as well. Recall that a function $ f:\R^m \supset U \to \R^n $ is
analytic if for any point $ x_0 $ in its domain, it can be expressed as a
power series converging to $ f(x) $ for all $ x $ in a neighborhood of $ x_0 $.

\begin{definition}[\bf Piecewise and almost eveywhere analytic functions]
	\label{def:aea}
	We say a function $ f:\R \to \R $ is \emph{piecewise analytic} if there
	exists a strictly increasing sequence of real numbers $ \{x_i\}_{i\in
	\mathbb{Z}} $ such that $ f $ is analytic when restricted to any open
	interval $ (x_i, x_{i+1}) $. A function $ f:\R^m \to \R^n $ is
	\emph{analytic almost everywhere} if there exists an open set $ U \subset
	\R^m $ such that $ f|_U $ is analytic and the complement of $ U $ has
	measure zero. By $ S(f) $ we denote the points where $ f $ is not analytic
	and $ D(f) = \R^m \setminus S(f)$. 
\end{definition}

\begin{example}[\bf Some almost everywhere analytic functions]
	\leavevmode
	\begin{enumerate}
		\item 
			The sigmoid function $ f:x \mapsto \frac{1}{1+\exp(-x)} $ is piecewise
			analytic with $ D(f) = \R $.
		\item 
			The ReLU function $ f:x \mapsto \max\{0,x\} $ is also piecewise analytic.
			Take $ x_i = i $. We have that $ f|_{(x_i, x_{i+1})}(x) = 0 $
			if $ i < 0 $ and $ f|_{(x_i, x_{i+1})}(x) = x $ if $ i \ge 0 $. Here $ S(f) = \{0\} $.
	\end{enumerate}
\end{example}

\begin{remark}[\bf Non-differentiable points]
	In the previous definition, there may be multiple sets $ \{U_i\}_{i \in I} $
	on which $ f $ is analytic and whose complements have measure zero. Denoting
	by $ \mathcal{U} $ the collection of all such sets, we see that it is
	inductively ordered with respect to the usual inclusion of sets. An application of
	Zorn's lemma tells us that there exists a maximal such open set. It is not
	hard to check that it is also unique. This guarantees that the sets $ D(f) $
	and $ S(f) $ are well-defined.
\end{remark}

\begin{remark}[\bf Why a.e. analytic functions are problematic]
	\label{ex:crf}
	Observe that the chain rule does not hold for almost everywhere analytic functions.
	Whereas it is true that if $ f $ is a.e.\ analytic and $ g $ is analytic, we have
	that $ g\circ f $ is a.e.\ analytic, it is not true that $ f\circ g $ is
	a.e.\ analytic as the next (counter-)example shows. 
	Let $ h:\R \to \R $ be a nowhere differentiable function (e.g.\ the
	Weierstrass function) and define $ f:\R^2 \to \R $ as $ f(x, y) =
	h(x) $ if $ y = 0 $ and $ f(x,y) = 0$ otherwise. Since $ f $ is not analytic
	only on the abscissa ($ y=0 $), it is a.e.\ analytic. Taking $ g:\R \to \R^2 $ to be
	the function $ g(x) = (x, 0) $, we see that the
	composition $ f \circ g $ is equal to $ h $, hence it is not a.e.\ analytic.
\end{remark}

As the next proposition shows, it turns out that we can use the structure of a
neural network to derive an analogue to the chain rule for the function
obtained by stacking together layers in a neural network. This is important
since it will be used later to show that any gradient descent map obtained
from a neural network loss function is non-singular.

\begin{proposition}[\bf Analogue of chain rule for neural networks]
	\label{prop:ff-fcaea}
	Let $ D > 0 $ be a positive number, $ \{\sigma_i:\R^{n_i} \to
	\R^{n_i}\}_{i=1}^D $ be a collection of a.e. analytic maps and $ \alpha \in \R^{d_0} $ a
	vector. Then the map defined recursively by
	\begin{align*}
		f_D:\R^{n_1\times n_0}\times \cdots \times\R^{n_H\times n_{D-1}} &\to
		\R^{n_D}\\
		(W_1, \ldots, W_D) &\mapsto \sigma_D(W_Df_{D-1}(W_1, \ldots, W_{D-1})),
	\end{align*}
	with $ f_1(W_1) = \sigma_1(W_1\alpha)$ is analytic almost everywhere and the set $
	\partial Z(f_D) $ has measure zero.
\end{proposition}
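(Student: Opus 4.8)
My plan is to prove the proposition by induction on the depth $D$, establishing simultaneously that $f_D$ is analytic almost everywhere and that $\partial Z(f_D)$ has measure zero. Two elementary facts would do the heavy lifting. The first is that if $A:\R^N\to\R^k$ is a \emph{surjective} linear map and $E\subset\R^k$ is null, then $A^{-1}(E)$ is null --- one passes to coordinates in which $A$ becomes a coordinate projection and invokes Fubini. This is exactly the property that the inner map $g(x)=(x,0)$ of Remark~\ref{ex:crf} fails to have, and it is what makes the neural-network setting different. The second fact is that a real-analytic map on a connected open set which is not the zero map has a zero set of Lebesgue measure zero; combined with the observation that, for a continuous $h$ on an open set $\mathcal{O}$, a boundary point of $Z(h)$ lying in $\mathcal{O}$ must itself lie in $Z(h)$, this lets me pass from ``$f_D$ is a.e.\ analytic'' to ``$\partial Z(f_D)$ is null''.

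For the base case $D=1$ I would note that $W_1\mapsto W_1\alpha$ is linear on $\R^{n_1\times n_0}$; if $\alpha\neq 0$ it is surjective, so the preimage of the null set $S(\sigma_1)$ is null and $f_1=\sigma_1(W_1\alpha)$ is analytic off that null set, while if $\alpha=0$ then $f_1\equiv\sigma_1(0)$ is constant and analytic everywhere. For the inductive step I would first let $U'$ be the maximal open set on which $f_{D-1}$ is analytic (so $S(f_{D-1})$ is null), write $U'$ as the disjoint union of its countably many open connected components, and separate the components on which $f_{D-1}\equiv 0$ --- where $f_D=\sigma_D(W_D\cdot 0)=\sigma_D(0)$ is constant, hence trivially analytic and contributing no boundary points to $Z(f_D)$ --- from the union $U''$ of the rest, on each component of which $f_{D-1}$ is a nonzero analytic map, so that $Z(f_{D-1})\cap U''$ is null.

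The core of the argument is then to handle $U''\times\R^{n_D\times n_{D-1}}$. I would introduce $\Phi(W_1,\dots,W_D)=W_D\,f_{D-1}(W_1,\dots,W_{D-1})$, analytic there (bilinear in $(W_D,f_{D-1})$, composed with the analytic $f_{D-1}$), and set $B=\Phi^{-1}(S(\sigma_D))$, which is relatively closed, hence measurable. For fixed $(W_1,\dots,W_{D-1})$ with $z:=f_{D-1}(W_1,\dots,W_{D-1})\neq 0$, the slice map $W_D\mapsto W_Dz$ is surjective linear, so the $W_D$-slice of $B$ is null by the first fact; and $z=0$ only on the null set $Z(f_{D-1})\cap U''$, so by Tonelli $B$ is null. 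Deleting $B$ and adjoining (times $\R^{n_D\times n_{D-1}}$) the components on which $f_{D-1}\equiv 0$ yields an open set $\mathcal{O}$ on which $f_D=\sigma_D\circ\Phi$ is analytic and whose complement, contained in $(S(f_{D-1})\times\R^{n_D\times n_{D-1}})\cup B$, is null; hence $f_D$ is a.e.\ analytic. For $\partial Z(f_D)$ I would use that it lies in $\mathcal{O}^c\cup(Z(f_D)\cap\mathcal{O})$ and that on each connected component of $\mathcal{O}$ either $f_D\equiv 0$ (an open set of interior points of $Z(f_D)$, no boundary) or $f_D\not\equiv 0$ and $Z(f_D)$ is null there; summing over the countably many components and using nullity of $\mathcal{O}^c$ closes the induction.

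The step I expect to be the main obstacle is exactly the locus $z=f_{D-1}=0$ in the inductive step: there $W_D\mapsto W_Dz$ is the zero map, not a submersion, so the clean preimage-is-null argument breaks --- precisely the mechanism behind the counterexample in Remark~\ref{ex:crf}. My way around it is structural rather than analytic: at such points the composition collapses to the \emph{constant} $\sigma_D(0)$, which is harmless, and the set of offending $(W_1,\dots,W_{D-1})$ is itself null because, component by component, $f_{D-1}$ is a nonzero analytic map; everywhere else $W_D\mapsto W_Dz$ genuinely is a submersion, the feature that matrix--vector multiplication enjoys and the pathological map of Remark~\ref{ex:crf} lacks. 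The remaining work --- measurability bookkeeping, checking $\mathcal{O}$ is open, that Tonelli applies, and that the boundaries between components of $U'$ sit inside $S(f_{D-1})$ --- I expect to be routine.
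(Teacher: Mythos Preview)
Your proposal is correct and shares the paper's inductive skeleton (induction on $D$, submersion property of $W_D\mapsto W_D z$ when $z\neq 0$, collapse to a constant when $z=0$, Fubini/Tonelli to glue slices), but it takes a genuinely different route at the two places where zeros of $f_{D-1}$ and of $f_D$ must be controlled. The paper decomposes the domain into $B(f_{D-1})=\partial Z(f_{D-1})\cup S(f_{D-1})$, $\operatorname{int}(Z(f_{D-1}))$, and the ``nice'' non-zero remainder, and for $\partial Z(f_D)$ it argues $\partial Z(f_D)\subset M_D^{-1}(\partial Z(\sigma_D))$ and invokes the submersion property together with the hypothesis that $\partial Z(\sigma_D)$ is null (an assumption the appendix version adds explicitly). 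You instead split $D(f_{D-1})$ into connected components and use the identity theorem plus the zero-set theorem for real-analytic maps: on each component $f_{D-1}$ is either identically zero or has null zero set, and likewise for $f_D$ on the components of $\mathcal{O}$. What this buys you is that you never need $\partial Z(\sigma_i)$ null as an input, and you do not actually use the inductive hypothesis ``$\partial Z(f_{D-1})$ is null'' in the step---it falls out for free from analyticity. What the paper's route buys is that it never invokes the identity theorem, so it extends verbatim to the $\mathcal{C}^k$ setting treated in the appendix, which your argument would not.
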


\begin{proof}[Proof sketch]
	We use induction. For the base case, one uses the fact that the map $ m:W
	\mapsto W\alpha $ is a non-singular map (since it is a submersion). The
	points where $ f_1 $ is not differentiable have to lie in $ m^{-1} (S(\sigma_1))
	$, hence they are of measure zero. A similar argument works for the boundary
	of $ Z(f_1) $.

	For the induction step, we split the domain of $ f_{D-1} $: $ B(f_{D-1}) =
	\partial Z(f_{D-1}) \cup S(f_{D-1}) $, the ``bad'' points; $
	\text{int}(Z(f_{D-1})) $, the ``nice'' zeroes; and $ N(f_{D-1}) = \text{dom}
	(f_{D-1}) \setminus (B(f_{D-1}) \cup \text{int}(Z(f_{D-1}))) $, consisting
	of all ``nice'' non-zero points. The points in $ B(f_{D-1}) $ have measure
	zero, thus they can be neglected. One can then use the chain rule and
	properties of matrix multiplication to guarantee the analyticity of $ f_D $
	in a big enough region. A priori, this region might not be equal to $ D(f_D)
	$, but it is big enough to guarantee that $ S(f_D) $ has measure zero,
	showing that $ f_D$ is a.e.\ analytic.
\end{proof}

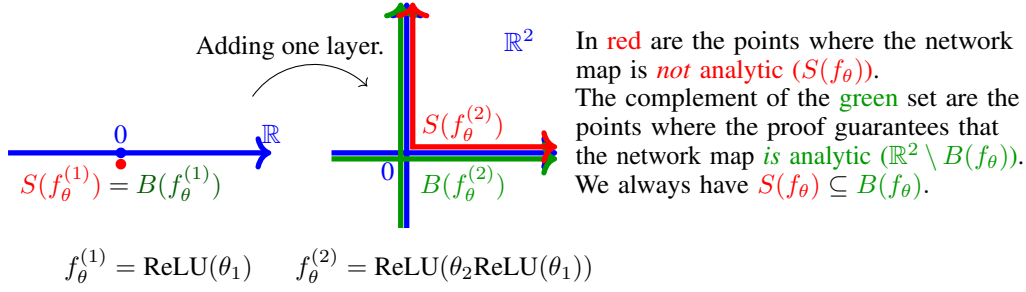
\begin{figure}[ht!]
	\centering
	\begin{tikzpicture}[line width=2]
		\draw[color=blue, ->] (-1.5, 0) -- (2,0);
		\draw[color=blue, fill=blue, radius=1pt] (0, 0) circle;
		\node at (0, 0.25) {\textcolor{blue}{0}};
		\node at (2, 0.25) {\textcolor{blue}{$\R$}};

		\draw[color=red, fill=red, radius=1pt] (0, -0.15) circle;
		\node at (0, -0.4) {\textcolor{red}{$S(f^{(1)}_\theta)$} $ = $
		\textcolor{green!60!black!60!black}{$B(f^{(1)}_\theta)$}};

		\node at (0.5, -1.5) {\textcolor{black}{$f^{(1)}_\theta = \text{ReLU}(\theta_1)$}};

		\node at (2.25, 1.4) {Adding one layer.};
		\draw[black, thin, ->] (1.75, 0.75) arc (145:45:1);
	
		\begin{scope}[shift={(-0.2,0.0)}]
			\draw[color=blue, ->] (3, 0) -- (6,0);
			\draw[color=blue, ->] (4, -1) -- (4,2);
			\node at (3.75, -0.25) {\textcolor{blue}{0}};
			\node at (5.5, 1.5) {\textcolor{blue}{$\R^2$}};

			\draw[color=green!60!black, ->] (3, -0.08) -- (6, -0.08);
			\draw[color=green!60!black, ->] (3.92, -1) -- (3.92,2);
			\node at (4.75, -0.4) {\textcolor{green!60!black}{$B(f^{(2)}_\theta)$}};

			\draw[color=red, ->] (4.05, 0.08) -- (6, 0.08);
			\draw[color=red, ->] (4.08, 0.05) -- (4.08,2);
			\node at (4.75, 0.4) {\textcolor{red}{$S(f^{(2)}_\theta)$}};

			\node at (4.5, -1.5) {\textcolor{black}{$f^{(2)}_\theta = \text{ReLU}(\theta_2\text{ReLU}(\theta_1))$}};
		\end{scope}
		\node[text width=5.9cm] at (9.0, 0.5) {In \textcolor{red}{red} are the points where
				the network map is \textcolor{red}{\emph{not} analytic
				($S(f_\theta)$)}. \\The complement of the
				\textcolor{green!60!black}{green} set
			are the points where the proof guarantees that the network map
		\textcolor{green!60!black}{\emph{is} analytic ($\R^2 \setminus
	B(f_\theta)$)}. We always have $ \textcolor{red}{S(f_\theta)} \subseteq
\textcolor{green!60!black}{B(f_\theta)}. $};
	\end{tikzpicture}
	\caption{Illustration of the main idea in the proof of
	Proposition~\ref{prop:ff-fcaea}.}
\end{figure}

\begin{proposition}[\bf Neural networks are a.e.\ analytic]
	\label{prop:nnaea}
	Let $ f_\theta:\R^{n_0} \to \R^{n_D} $ be an FF-FC, convolutional
	network or a network using softmax attention with piecewise
	analytic activation functions. Then for any input $ x \in \R^{n_0} $, the map
	$ \theta \mapsto f_\theta(x) $ is almost everywhere analytic.
\end{proposition}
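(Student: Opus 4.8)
The plan is to reduce Proposition~\ref{prop:nnaea} to Proposition~\ref{prop:ff-fcaea} by showing that every architecture in question --- fully connected, convolutional, or softmax attention --- fits (after a harmless reparametrization) into the recursive template $f_D(W_1,\dots,W_D) = \sigma_D(W_D f_{D-1}(W_1,\dots,W_{D-1}))$ with $f_1(W_1) = \sigma_1(W_1\alpha)$, where $\alpha = x$ is the fixed input and the $\sigma_i$ are a.e.\ analytic. The first step is the FF-FC case: here the identification is immediate, since the layer map $z^{(d-1)}\mapsto\sigma_d(W_d z^{(d-1)})$ is exactly the recursion (the bias is zero by the standing convention), and each $\sigma_d$ applied entry-wise is a.e.\ analytic because a univariate piecewise analytic function $g$ has $S(g)$ a discrete set, so the entry-wise map $\R^{n_i}\to\R^{n_i}$ has non-analytic set contained in a finite union of hyperplanes, which has measure zero and is closed. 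Proposition~\ref{prop:ff-fcaea} then gives a.e.\ analyticity directly.

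The second step handles convolutional layers. A convolution with a learnable kernel is a linear map in the layer input whose matrix entries are linear (hence analytic) functions of the kernel parameters; equivalently, padding/stride/channel bookkeeping only rearranges and shares entries of a weight matrix. So a convolutional layer followed by an entry-wise activation is again of the form $\sigma_d(A(W_d)\, z^{(d-1)})$ where $W_d\mapsto A(W_d)$ is linear. I would remark that Proposition~\ref{prop:ff-fcaea} still applies: either one absorbs the linear reindexing into a redefined (still a.e.\ analytic, in fact analytic composed with entry-wise $\sigma$) outer map, or one observes that the proof of Proposition~\ref{prop:ff-fcaea} only used that $W\mapsto W v$ is a submersion in the relevant variables, and $W_d\mapsto A(W_d) z^{(d-1)}$ is still a submersion onto its image in the parameters for generic $z^{(d-1)}$ (and the degenerate locus is handled exactly by the $B(f_{d-1})$/$\mathrm{int}(Z(f_{d-1}))$/$N(f_{d-1})$ split already in that proof). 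The key point I want to make explicit is that pooling layers, being fixed (parameter-free) piecewise-linear maps, are themselves a.e.\ analytic and compose harmlessly on the left.

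The third step is softmax attention, which is the only genuinely new ingredient. The softmax function $\mathrm{softmax}(u)_k = e^{u_k}/\sum_j e^{u_j}$ is real-analytic everywhere on $\R^N$, and the attention map is a composition of: (i) linear maps $Q,K,V$ in the query/key/value weights applied to the (fixed-for-this-input) token matrix, (ii) bilinear products $QK^\top$, (iii) the analytic softmax, and (iv) another product with $V$. Every operation here is analytic in the parameters, so an attention layer is a genuinely analytic map of its parameters (for fixed input), and hence trivially a.e.\ analytic; in the recursion it plays the role of $\sigma_D$ composed with a linear map, and a.e.\ analyticity is preserved since the left-composition of an analytic map with an a.e.\ analytic map is a.e.\ analytic (this is precisely the ``good'' direction of the chain rule noted in Remark~\ref{ex:crf}). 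Finally, for a deep architecture that interleaves these layer types (e.g.\ the transformer: convolutional and attention layers followed by a fully connected network), one iterates: the output of each layer is an a.e.\ analytic function of the parameters seen so far, and adding one more layer of any of the three types preserves this by the same induction as in Proposition~\ref{prop:ff-fcaea}, where the only structural facts used are that (a) each new layer's parameter-to-preactivation map is a submersion off a measure-zero set and (b) each activation/attention nonlinearity is a.e.\ analytic.

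I expect the main obstacle to be bookkeeping rather than a conceptual gap: one must verify carefully that weight-sharing in convolutional layers does not destroy the submersion property used in the base/induction steps of Proposition~\ref{prop:ff-fcaea}, and one must be careful that when layers are composed the relevant ``bad set'' $B(f_{d-1})$ in the input to layer $d$ still pulls back to a measure-zero set under the (now possibly non-full-rank in some directions) parameter map --- but this is exactly the situation already analyzed in the proof of Proposition~\ref{prop:ff-fcaea} via the decomposition into $B$, $\mathrm{int}(Z)$, and $N$, so the deep/mixed case follows by a straightforward induction once the per-layer claims are in place. The details are deferred to Appendix~\ref{section:architectures} and Appendix~\ref{section:proof}.
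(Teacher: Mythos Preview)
Your proposal is correct and takes essentially the same approach as the paper: reduce each layer type to the recursive template of Proposition~\ref{prop:ff-fcaea}, with the convolutional case handled by observing that weight-sharing restricts the multiplication map to a linear subspace on which it remains non-singular for generic data (this is the paper's Proposition~\ref{prop:conv}), and the attention case handled by the everywhere-analyticity of softmax and matrix products (the paper's Proposition~\ref{prop:attention}). The only point you leave slightly implicit is that the induction must also propagate the hypothesis that $\partial Z(f_d)$ has measure zero --- for an attention layer the paper secures this via the standard fact that the zero set of a nonconstant real-analytic function is null --- but you are right that this is bookkeeping rather than a conceptual gap.
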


\begin{proof}[Proof sketch.]
	The proof relies on Proposition~\ref{prop:ff-fcaea}. Let $ f_\theta$ be the
	network map. Then we can directly apply Proposition~\ref{prop:ff-fcaea} to
	show that the map $ \theta \mapsto f_\theta(x) $ is a.e.\ analytic for any
	input $ x \in \R^{n_0} $. We explain in Appendix~\ref{section:proof} how that can be done
	for the convolutional and softmax attention layers.
\end{proof}

The previous proposition is a key result, but it cannot be used directly to conclude that a
GD map is non-singular. Theorem~\ref{thm:main} will be proven in
the next section. We now show that the empirical loss from a
neural network is a.e.\ analytic. This is an important result since we need it
to  rove Theorem~\ref{thm:main}. We start with a lemma concerning
the composition of a.e.\ analytic maps and analytic maps.

\begin{lemma}[\bf Composition of a.e.\ analytic maps]
	\label{lem:composition}
	Let $ f:\R^m \to \R^n $ be an a.e.\ analytic function and $ g:\R^n \to \R^l $
	an analytic function. Then the composition $ g\circ f:\R^m \to \R^l $ is
	a.e.\ analytic. 
\end{lemma}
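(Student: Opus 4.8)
The plan is to exploit the asymmetry already flagged in Remark~\ref{ex:crf}: putting an a.e.\ analytic map on the \emph{inside} of an everywhere-analytic map is harmless, since the outer map cannot introduce new points of non-analyticity. Concretely, since $f$ is a.e.\ analytic, Definition~\ref{def:aea} furnishes an open set $U\subseteq\R^m$ such that $f|_U$ is analytic and $\R^m\setminus U$ has measure zero. I would simply take this same $U$ as the witness set showing that $g\circ f$ is a.e.\ analytic.

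The one substantive step is to check that $(g\circ f)|_U$ is analytic. Restricted to $U$ we have $(g\circ f)|_U = g\circ(f|_U)$, where $f|_U:U\to\R^n$ is analytic and $g:\R^n\to\R^l$ is analytic on all of $\R^n$, in particular on a neighborhood of $f(U)$. The composition of real-analytic maps between open subsets of Euclidean spaces is again real-analytic; this is classical and can be justified in a single line, e.g.\ by substituting the locally convergent power series of $f|_U$ into that of $g$ and checking convergence, or by passing to local holomorphic extensions and using that holomorphic maps compose to holomorphic maps. Hence $(g\circ f)|_U$ is analytic on the open set $U$, whose complement is null, which is exactly the statement that $g\circ f$ is a.e.\ analytic. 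As a byproduct, by the uniqueness of the maximal analyticity domain noted after Definition~\ref{def:aea}, one gets $D(g\circ f)\supseteq D(f)$, equivalently $S(g\circ f)\subseteq S(f)$.

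I do not expect a real obstacle here: the lemma is precisely the ``easy half'' of the chain-rule discussion, and stands in deliberate contrast to the failing direction ($f$ analytic, $g$ only a.e.\ analytic) ruled out by the Weierstrass-function counterexample in Remark~\ref{ex:crf}. The only point deserving explicit mention is the closedness of real-analytic maps under composition; everything else is unwinding Definition~\ref{def:aea}.
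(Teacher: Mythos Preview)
Your proposal is correct and matches the paper's own proof essentially line for line: the paper simply notes that $f|_{D(f)}$ is analytic, applies the chain rule to conclude $(g\circ f)|_{D(f)}$ is analytic, and hence $g\circ f$ is a.e.\ analytic. Your additional remarks on the justification for closure of real-analytic maps under composition and the inclusion $S(g\circ f)\subseteq S(f)$ are correct elaborations but go slightly beyond what the paper writes out.
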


\begin{proof}
	We have that $ f|_{D(f)} $ is analytic. Applying the chain rule, we have
	that $ g\circ f|_{D(f)}  = (g \circ f)|_{D(f)}$ is analytic, hence $ g\circ
	f$ is analytic a.e.
\end{proof}

\begin{cor}
	Let $ f_\theta:\R^{n_0} \to \R^{n_D} $ be any neural network with input
	dimension $ n_0 $, output dimension $ n_D $, and $ n_\theta $ parameters,
	using piecewise analytic activation functions. Then, given any dataset $ \{
	(x_i, y_i)\}_{i=1}^m $ and any analytic loss function $
	l:\R^{n_D}\times\R^{n_D} \to \R $, the empirical loss given by
	\[
		L(\theta) = \frac{1}{m} \sum_{i=1}^n l(y_i, f_\theta(x_i))
	\]
	is almost everywhere analytic.
\end{cor}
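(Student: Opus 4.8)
The plan is to prove the statement in two moves. First, I would show that each individual summand $\theta \mapsto l(y_i, f_\theta(x_i))$ is almost everywhere analytic, using Proposition~\ref{prop:nnaea} together with Lemma~\ref{lem:composition}. Second, I would observe that any finite sum (more generally, any finite linear combination) of a.e.\ analytic functions on the same domain is again a.e.\ analytic. Since $L$ is by definition $\tfrac1m$ times such a finite sum, the corollary follows.

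For the first move, fix an index $i \in \{1,\ldots,m\}$. By Proposition~\ref{prop:nnaea}, the map $g_i:\theta \mapsto f_\theta(x_i)$ from $\R^{n_\theta}$ to $\R^{n_D}$ is a.e.\ analytic. Since $l:\R^{n_D}\times\R^{n_D}\to\R$ is analytic, freezing its first argument at $y_i$ yields the section $l(y_i,\cdot):\R^{n_D}\to\R$, which is the restriction of an analytic function to an affine subspace and hence itself analytic. Applying Lemma~\ref{lem:composition} with $f = g_i$ and $g = l(y_i,\cdot)$, the composition $h_i := l(y_i,\cdot)\circ g_i$, that is $\theta \mapsto l(y_i, f_\theta(x_i))$, is a.e.\ analytic. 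Write $U_i = D(h_i)$ for the open, full-measure set on which $h_i$ is analytic.

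For the second move, set $U = \bigcap_{i=1}^m U_i$. Being a finite intersection of open sets, $U$ is open, and its complement $\R^{n_\theta}\setminus U = \bigcup_{i=1}^m (\R^{n_\theta}\setminus U_i)$ is a finite union of measure-zero sets, hence of measure zero. On $U$ every $h_i$ is analytic, so $L = \tfrac1m\sum_{i=1}^m h_i$ is a finite linear combination of analytic functions on $U$ and is therefore analytic on $U$. By Definition~\ref{def:aea}, $L$ is a.e.\ analytic, which is the claim.

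I do not expect a genuine obstacle here; the argument is essentially a bookkeeping combination of the preceding results. The one point deserving care is the closure of the class of a.e.\ analytic functions under finite sums, which reduces precisely to the fact that a finite intersection of open full-measure sets is open and full-measure. This step genuinely uses the finiteness of the dataset (an infinite intersection of full-measure sets need be neither open nor of full measure), and it is the reason the conclusion — and, downstream, Theorem~\ref{thm:main} for SGD — holds irrespective of how many data points there are, as long as there are finitely many.
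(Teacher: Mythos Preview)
Your proposal is correct and takes essentially the same approach as the paper: apply Proposition~\ref{prop:nnaea} and Lemma~\ref{lem:composition} to each summand, then use that a finite sum of a.e.\ analytic functions is a.e.\ analytic. The only cosmetic difference is that the paper composes the full loss $l$ with $\phi_i:\theta\mapsto (y_i,f_\theta(x_i))$ rather than first freezing the first argument to get $l(y_i,\cdot)$; your version is otherwise more detailed, in particular spelling out the finite-intersection argument that the paper simply asserts.
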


\begin{proof}
	The maps $ \phi_i: \theta \mapsto (y_i, f_\theta(x_i))$ are a.e.\ analytic
	(since, by Proposition~\ref{prop:nnaea}, each component is), thus we can use Lemma~\ref{lem:composition} to
	conclude that the compositions $ l \circ \phi_i $ are a.e.\ analytic. The
	sum of a.e.\ analytic functions is a.e.\ analytic, thus it follows that $ L
	$ is a.e.\ analytic.
\end{proof}

\section{(Stochastic) Gradient Descent Map is Non-Singular}
\label{section:gd_map_non_sing}
We now use the results of the previous section to show that any gradient descent
map is non-singular. Since this is true for any number of data points, we can
immediately conclude that any SGD map $ G_{\eta, i} $ is also non-singular.
Because of this, we will state our results for GD, but keep in mind that they
also hold for SGD as well. In particular, observe that we have an explicit
description for any gradient descent map for almost all points. We start with a
helpful lemma.

\begin{lemma}[\bf Submersions are non-singular]
	Let $ f: \R^n \supset U \to \R^n $ be a smooth map. If the set of critical
	points of $ f $, i.e.\ points where the Jacobi determinant $ \det(Df) $ is 
	zero, has measure zero, then $ f $ is non-singular.
\end{lemma}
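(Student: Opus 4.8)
The plan is to split the domain into the critical set $C=\{x\in U:\det(Df(x))=0\}$ and its complement, and to treat the two pieces separately. Fix an arbitrary set $A\subseteq\R^n$ of measure zero; the goal is to show that $f^{-1}(A)$ has measure zero. Write $f^{-1}(A)=\bigl(f^{-1}(A)\cap C\bigr)\cup\bigl(f^{-1}(A)\cap(U\setminus C)\bigr)$. The first set is contained in $C$, which has measure zero by hypothesis, so it can be ignored; everything reduces to controlling the second set.

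On the open set $U\setminus C$ the derivative $Df(x)$ is invertible at every point, so the inverse function theorem provides, for each $x\in U\setminus C$, an open neighborhood $V_x\subseteq U\setminus C$ such that $f|_{V_x}$ is a diffeomorphism onto the open set $f(V_x)$. Since $\R^n$ is second countable (hence Lindel\"of), the cover $\{V_x\}_{x\in U\setminus C}$ admits a countable subcover $\{V_k\}_{k\in\mathbb{N}}$. As a countable union of measure-zero sets has measure zero, it suffices to show that each $f^{-1}(A)\cap V_k$ has measure zero.

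Fix $k$ and let $g_k=(f|_{V_k})^{-1}\colon f(V_k)\to V_k$. A point $x\in V_k$ lies in $f^{-1}(A)$ precisely when $f(x)\in A\cap f(V_k)$, so $f^{-1}(A)\cap V_k=g_k\bigl(A\cap f(V_k)\bigr)$. Now $g_k$ is smooth, hence locally Lipschitz, and a locally Lipschitz map sends sets of measure zero to sets of measure zero (cover the source by countably many balls on which the map is Lipschitz with some constant $L$; a Lipschitz map with constant $L$ inflates outer measure by at most a factor $L^n$). Since $A\cap f(V_k)\subseteq A$ has measure zero, so does $g_k(A\cap f(V_k))$. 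Combining over all $k$ and adding back the null contribution from $C$ gives that $f^{-1}(A)$ has measure zero, so $f$ is non-singular.

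I do not anticipate a genuine obstacle: the argument is a standard localization via the inverse function theorem. The only points deserving care are the passage to a countable subcover (so that the resulting union of null images is still null) and the elementary fact that locally Lipschitz --- in particular $C^1$ --- maps preserve Lebesgue-null sets; both are routine.
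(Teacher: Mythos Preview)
Your proof is correct and follows essentially the same approach as the paper: split off the null critical set, apply the inverse function theorem on its complement, pass to a countable subcover via second countability, and use that the local inverse (a smooth, hence locally Lipschitz, map) sends null sets to null sets. The only cosmetic difference is that the paper first isolates the special case $\det(Df)\neq 0$ everywhere and then reduces to it, whereas you handle the decomposition in one pass and spell out the locally-Lipschitz justification slightly more explicitly.
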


\begin{proof}
	Suppose that $\det(Df(x)) \neq 0$ for any $x \in U$. This, together with the
	inverse function theorem \citep[][Theorem 4.5]{lee2012} tells us that $f$ is
	a local diffeomorphism.

	Let $B \subset \R^n$ be any measure zero set. Because $f$ is a local
	diffeomorphism, around every point $x \in f^{-1}(B)$ there exists an open
	set $U_x \ni x$ such that $f$ restricted to it, $f|_{U_x}\colon U_x
	\rightarrow f(U_x)$, is a diffeomorphism. Since $f^{-1}(B) \subset \R^n$ and
	$\R^n$ is second countable, we can extract a countable subcover $\left\{
	U_{x_i} \right\}_{i=1}^\infty$ of $f^{-1}(B)$. Because $B$ has measure zero,
	so does each intersection $B_i = B \cap f(U_{x_i})$. Since $f$ restricted to
	$U_{x_i}$ is a diffeomorpishm, $f|_{U_i}^{-1}(B_i)$ has measure zero for all
	$i$. Observe that $$f^{-1}(B) \subset \cup_{i=1}^\infty
	f|_{U_i}^{-1}(B_i).$$ A countable union of measure zero sets has measure
	zero and a subset of a measure zero set has measure zero, thus the special
	case is proved.
 
	Going back to the general case, suppose that $ V = \{\theta|\det(Df(\theta))
	= 0\} $ has measure zero. The determinant is a continuous map and a
	singleton is closed, thus $V$ is closed. Then, the restriction of $f$ to the
	open set $U = \R^n\setminus V$ is a local diffeomorphism.

	Let $B \subset \R^n$ be any measure zero set. Its preimage under $f$ can be
	written as $$f^{-1}(B) = f|_{V}^{-1}(B) \cup f|_{U}^{-1}(B).$$ The first
	term in the union is the intersection of a set with $V$, which has measure
	zero by assumption, thus it has measure zero. The second term has measure
	zero using the special case proved above. It follows that $f^{-1}(B)$ has measure zero. 
\end{proof}

We use the previous lemma to show that \emph{the} gradient descent map for analytic
loss functions is non-singular. We emphasize that there is a \emph{unique} gradient
descent map if the loss is analytic. 

\begin{breakproposition}[\bf GD map for analytic loss functions is non-singular for
	almost all step-sizes]
	Let $ L:\R^{n_\theta} \supset U \to \R $ be an analytic function and assume without
	loss of generality that $ U $ is connected. Then, for almost all $ \eta > 0
	$, the GD map $ G_\eta: x \to x - \eta\nabla L(x) $ is non-singular.
\end{breakproposition}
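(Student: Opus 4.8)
The plan is to reduce to the previous lemma on submersions: since $L$ is analytic, $\nabla L$ and hence $G_\eta$ are analytic (in particular smooth), so it suffices to show that for almost every $\eta>0$ the set of critical points of $G_\eta$ has Lebesgue measure zero. The differential is $DG_\eta(x)=I-\eta\,\mathrm{Hess}\,L(x)$. Write $H(x):=\mathrm{Hess}\,L(x)$; this is a matrix whose entries are analytic functions on $U$, so for each fixed $\eta$ the function
\[
	p(\eta,x):=\det\bigl(I-\eta H(x)\bigr)
\]
is analytic in $x\in U$ (a polynomial in the entries of $H$), and the critical set of $G_\eta$ is exactly $Z\bigl(p(\eta,\cdot)\bigr)=\{x\in U: p(\eta,x)=0\}$.

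Next I would invoke the classical fact that a real-analytic function on a connected open set is either identically zero or has zero set of Lebesgue measure zero. Consequently, for a given $\eta$ the critical set of $G_\eta$ is null \emph{unless} $p(\eta,\cdot)\equiv 0$ on $U$. It therefore remains to control the ``bad'' set $B:=\{\eta>0: p(\eta,x)=0 \text{ for all } x\in U\}$, and the key point is that $B$ is finite. Fix any $x_0\in U$: then $\eta\mapsto p(\eta,x_0)=\det(I-\eta H(x_0))$ is a polynomial in $\eta$ of degree at most $n_\theta$ whose value at $\eta=0$ is $\det(I)=1\neq 0$, hence it is not the zero polynomial and has at most $n_\theta$ roots. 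Since $B\subseteq\{\eta: p(\eta,x_0)=0\}$, we get $|B|\le n_\theta$, so $B$ is finite and in particular has measure zero. (These excluded $\eta$ are reciprocals of eigenvalues of $H$, which is why certain step-sizes must be avoided, as discussed later.) Thus for every $\eta\in(0,\infty)\setminus B$ the critical set of $G_\eta$ has measure zero, and the submersion lemma yields non-singularity of $G_\eta$.

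Finally, the hypothesis that $U$ is connected is genuinely used only so that ``not identically zero'' upgrades to ``null zero set''; the reduction is harmless because an open $U\subseteq\R^{n_\theta}$ has at most countably many connected components $\{U_j\}$, and running the argument above on each $U_j$ produces finite bad sets $B_j$. Since $G_\eta^{-1}(A)=\bigcup_j \bigl(G_\eta|_{U_j}\bigr)^{-1}(A)$ for any null set $A$, the map $G_\eta$ is non-singular for every $\eta\notin\bigcup_j B_j$, which is a countable and therefore null union.

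I expect the only delicate ingredient to be the measure-zero property of zero sets of real-analytic functions on connected domains (classical, but worth citing precisely), together with correctly combining the two roles of the single function $p$: analytic in $x$ for fixed $\eta$, and a nontrivial polynomial in $\eta$ for fixed $x$. Everything else is bookkeeping around the submersion lemma.
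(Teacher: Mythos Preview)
Your proof is correct and takes a genuinely different route from the paper's. The paper splits into two cases according to whether the eigenvalues of the Hessian $H_L$ are all constant on $U$; in the non-constant case it chooses an analytic path between two points with different eigenvalues and invokes Kato's analytic perturbation theory to parameterize the eigenvalues of $H_L\circ\gamma$ analytically, then argues that for each $\eta$ one can find a $\bar t$ at which no eigenvalue equals $1/\eta$. By contrast, you bypass all of this by fixing a single point $x_0$ and observing that $\eta\mapsto p(\eta,x_0)=\det(I-\eta H(x_0))$ is a polynomial of degree at most $n_\theta$ with constant term $1$, so the bad set $B$ has at most $n_\theta$ elements. This is strictly more elementary (no perturbation theory, no path interpolation), yields the explicit bound $|B|\le n_\theta$ per component, and in fact is more robust: the paper's Case~2 argument, as written, tacitly needs that no eigenvalue branch along $\gamma$ is constantly equal to $1/\eta$, whereas your argument has no such subtlety. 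The only trade-off is that the paper's approach, when all eigenvalues along the path are genuinely non-constant, would give non-singularity for \emph{every} $\eta>0$ rather than all but finitely many; but this extra strength is neither claimed in the proposition nor needed downstream.
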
 

\begin{proof}
	If $ U $ has more than one connected component, we will show that $ G_\eta|_{U_i}
	$, where $ U_i $ is a connected component of $ U $, is non-singular. Since $ U
	$ has at most countably many connected components, it follows that $ G_\eta $ is
	non-singular as well.

	If all the eigenvalues $ \lambda_1, \ldots, \lambda_{n_\theta} $ of the
	$H_L$, the Hessian of $ L $, are constant, then for any $ \eta \not\in
	\{1/\lambda_1, \ldots, 1/\lambda_{n_\theta}\} $ we have that $ \det(I - \eta
	H_L) = \prod_{i=1}^{n_\theta} (1-\eta\lambda_i)$ is non-zero. This means
	that the Jacobi determinant $ \det DG_\eta = \det(I - \eta H_L) $ is always
	non-zero, hence $ G_\eta$ is non-singular.

	If at least one eigenvalue $ \lambda $ is not constant, there exist two
	points $ \theta_1, \theta_2 \in U $ such that $ \lambda(\theta_1) \neq
	\lambda(\theta_2) $.  Take any analytic path $\gamma\colon \R \to U$
	with $t_1\neq t_2\in \R$, $\gamma(t_1) = \theta_1$ and
	$\gamma(t_2) = \theta_2$. This can be done since we can interpolate any
	finite number of points using analytic functions. Since $\gamma$ and the
	map $\theta \mapsto H_L(\theta)$ are both analytic, their composition, $t
	\mapsto H_L\circ\gamma(t) \in \R^{{n_\theta}\times {n_\theta}}$, is analytic as well. In
	this case, the eigenvalues of $H_L\circ\gamma$ can be globally
	parameterized by analytic functions, i.e., there exist analytic functions
	$\lambda_i\circ\gamma\colon \R \to \R$ equal to the eigenvalues
	of $H_L\circ\gamma(t)$ for all $t$ \citep{kato1995}.

	For a non-constant analytic function, the set where it is equal to a
	constant $c\in \R$, $Z(\lambda_i\circ\gamma - c) \subset \R$
	has measure zero. For any $\overline{t} \in \R \setminus\cup_{i=1}^{n_\theta}
	Z(\lambda_i\circ\gamma) \subset \R$, it holds that
	$\lambda_i\circ\gamma(\overline{t}) \neq c$. In particular, for any $\eta>0$ there exists
	$\overline{\theta} = \gamma(\overline{t}) \in \R^{n_\theta}$ such that $\det(I
	- \eta H_L(\overline{\theta})) \neq 0$. We have showed that the analytic map
	$ \theta \mapsto \det(I-\eta H_L(\theta)) $ is not constant, thus its set of
	zeros has measure zero.  We conclude that $G_\eta$ is invertible almost
	everywhere, i.e., the set of points where the Jacobi determinant is zero has
	measure zero, whence we can apply the previous lemma to show that $ G_\eta $ is non-singular.
\end{proof}

\begin{cor}[\bf GD map is non-singular]
	\label{cor:gd_on_ae}
	Let $ L:\R^{n_\theta} \to \R $ be an a.e.\ analytic function, $ \eta > 0$,
	and $ G_\eta:\R^{n_\theta} \to \R^{n_\theta} $ be a function such that for
	all $ \theta \in D(L) $ we have that $ G_{\eta}(\theta) = \theta -
	\eta\nabla L(\theta) $. Then, for almost all $ \eta > 0 $, any gradient
	descent map $ G_\eta $ is non-singular.
\end{cor}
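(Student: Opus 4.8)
The plan is to reduce the statement to the preceding proposition, which already establishes non-singularity for a genuinely analytic loss on an open (possibly disconnected) domain; the corollary only needs two extra, essentially bookkeeping, observations: controlling the exceptional set of step-sizes across connected components, and checking that the arbitrary values $G_\eta$ may take on the null set $S(L)$ do not matter.

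First I would record the structural facts supplied by a.e.\ analyticity: $D(L)$ is open, $S(L) = \R^{n_\theta}\setminus D(L)$ has measure zero, and $\nabla L$ --- hence the assignment $\theta \mapsto \theta - \eta\nabla L(\theta)$ --- is well-defined and analytic on $D(L)$, so $G_\eta$ is genuinely pinned down there. Since an open subset of $\R^{n_\theta}$ has at most countably many connected components, write $D(L) = \bigsqcup_i U_i$ with each $U_i$ open, connected, and $L|_{U_i}$ analytic. Applying the preceding proposition to each $L|_{U_i}$ produces a measure-zero set $E_i \subset (0,\infty)$ such that $G_\eta|_{U_i}$ is non-singular for every $\eta \notin E_i$. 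Setting $E = \bigcup_i E_i$, which is still null as a countable union of null sets, we obtain: for every $\eta \notin E$ and every $i$, the restriction $G_\eta|_{U_i}$ is non-singular.

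Next I would assemble these pieces. Fix $\eta \notin E$ and let $B \subset \R^{n_\theta}$ be any null set. Then $G_\eta^{-1}(B) = \big(\bigcup_i (G_\eta|_{U_i})^{-1}(B)\big) \cup \big(G_\eta^{-1}(B)\cap S(L)\big)$. Each $(G_\eta|_{U_i})^{-1}(B)$ is null by non-singularity of $G_\eta|_{U_i}$, and a countable union of null sets is null; the remaining piece is a subset of the null set $S(L)$, hence is itself null (Lebesgue measure is complete, so arbitrary subsets of a null set are measurable and null). Therefore $G_\eta^{-1}(B)$ is null, i.e.\ $G_\eta$ is non-singular, and this holds for all $\eta \notin E$, i.e.\ for almost all $\eta > 0$.

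I do not expect a genuine obstacle here. The only points requiring care are the two bookkeeping ones already flagged: that $D(L)$ has at most countably many components, so the union of the bad step-size sets $E_i$ remains null; and that whatever values $G_\eta$ is assigned on $S(L)$ cannot enlarge a preimage beyond a null set, because that portion of the preimage is contained in $S(L)$ itself.
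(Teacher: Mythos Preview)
Your proof is correct and follows the same approach as the paper: split the preimage into the part over $D(L)$, where the preceding proposition gives non-singularity, and the part over $S(L)$, which is null by hypothesis. The only difference is that you spell out the countable-components bookkeeping (the sets $E_i$ and $E=\bigcup_i E_i$) explicitly in the corollary, whereas the paper absorbs that step into the proof of the preceding proposition itself and so states the corollary's argument in two lines.
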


\begin{proof}
	By the previous proposition, $ G_\eta $ is non-singular when
	restricted to $ D(L) $ for almost all values of $ \eta $. If
	$ B $ is any measure zero set in $ \R^{n_\theta} $, we have that $ G_{ \eta}^{-1} (B)
	= G_{\eta}|_{D(L)}^{-1} (B) \cup G_{\eta}|_{S(L)}^{-1} (B)$. The first set
	in the union has measure zero since $ G_{\eta} $ is non-singular on $ D(L) $
	and the second one as well since $ S(L) $ has measure zero.
\end{proof}

\section{Discussion} \label{section:discussion}
In this paper, we have demonstrated that for neural networks with piecewise
analytic activation functions, the GD map $ G_\eta(\theta) = \theta - \eta\nabla
L(\theta) $ and the SGD maps $ G_{\eta, i} $ are non-singular for almost all
step-sizes $\eta $. This result has profound implications for the theoretical
understanding of neural network optimization. By proving that the loss function
is analytic almost everywhere and that the (S)GD map preserves sets of measure zero
under preimages, we confirm a key assumption in prior works, such as those by
\citet{lee2019} and \citet{chemnitz2024}. This bridges a critical gap between
theoretical guarantees and the practical success of GD in training deep
networks.

Our findings imply that, for almost all initializations and step-sizes, the
optimization trajectory avoids pathological behaviors --- such as convergence to
saddle points or unstable minima\footnote{One also has to show that the saddle
points and the unstable minima lie in a measure zero set. When $ n_\theta
\leq m $ this is true since such points are isolated. In the
overparameterized case, when $ n_\theta > m $, proving this requires more care.
\citet{cooper2020, cooper2021} have proven this for neural networks with smooth activations.}
--- aligning with empirical observations of (S)GD’s effectiveness. The proof
relies on two key insights: first, the piecewise analyticity of the activation
functions ensures the loss is analytic outside a set of measure zero; second,
the analytic properties of the loss in the ``nice'' regions ensure the GD map is
invertible almost everywhere, except for a negligible number of values for the
step-size.  While our result is robust across almost all step-sizes, we note
that non-singularity may fail at specific values for the step-size (where $ \eta
= 1/\lambda_i $ for an eigenvalue $ \lambda_i $ of the Hessian of $ L $), which
are negligible in practice.

To illustrate the connection between our result and the stability of minima, we
first recall a key result from \citet{chemnitz2024}. They show that for a
generic parametric model, if the GD map is non-singular and the set of global
minima $ M $ forms a manifold, then the stability of a minimum $ \theta \in M
$ is completely determined by a computable quantity. Our Theorem~\ref{thm:main}
provides the missing piece to rigorously apply this framework to neural
networks. By establishing the non-singularity of the (S)GD map for 
networks with piecewise analytic activations, we can now state the following:

\begin{cor}
	Let the assumptions be as in Theorem~\ref{thm:main}. Additionally, suppose
	the data is generic and let $ M $ be the set of global minima of the loss
	function $ L $. Then, for almost all step-sizes, there exist functions $
	\mu(\theta) $ and $ \lambda(\theta) $ such that a global minimum $ \theta
	\in M $ is stable under GD or SGD only if $ \mu(\theta) < 0 $ or $
	\lambda(\theta) < 0 $, respectively. Conversely, if $ \mu(\theta) > 0 $ or $
	\lambda(\theta) > 0 $, then $ \theta $ is unstable.
\end{cor}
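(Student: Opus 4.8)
The plan is to combine Theorem~\ref{thm:main} with the stability characterization of \citet{chemnitz2024}, using the genericity hypothesis to supply the structural assumptions that their framework requires. First I would recall that \citet{chemnitz2024} works in the setting of a generic parametric model where the set of global minima $M$ is a (smooth) submanifold of $\R^{n_\theta}$, and where the GD map (and the SGD map) is non-singular; under these hypotheses they construct explicit quantities — call them $\mu(\theta)$ for GD and $\lambda(\theta)$ for SGD — built from the spectral data of the Hessian of $L$ (respectively from the transition operator of the stochastic iteration) along $M$, and they prove the dichotomy: $\mu(\theta) < 0$ (resp.\ $\lambda(\theta) < 0$) is necessary for stability, while $\mu(\theta) > 0$ (resp.\ $\lambda(\theta) > 0$) forces instability. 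So the corollary is really an instantiation: I need to check that each structural hypothesis of their theorem is met in our setting.

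The second step is to discharge those hypotheses. Non-singularity of $G_\eta$ (and of each $G_{\eta,i}$) for almost all step-sizes is exactly the content of Theorem~\ref{thm:main}, so that is immediate. The remaining assumptions — that $M$ is a manifold and that the loss is sufficiently regular near $M$ for the spectral quantities to be well defined — are where the ``generic data'' hypothesis enters: by the a.e.\ analyticity of $L$ (the Corollary in Section~\ref{section:loss_func_analytic}), $L$ is analytic on the open full-measure set $D(L)$, and for generic data one expects $M \subset D(L)$ and $M$ to be cut out transversally, hence a manifold, by the same kind of argument invoked in the footnote of Section~\ref{section:discussion} (isolated minima when $n_\theta \le m$; the analysis of \citet{cooper2020, cooper2021} in the overparameterized case, here adapted from smooth to piecewise analytic activations via analyticity on $D(L)$). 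With $M$ a manifold inside the analytic locus, the Hessian of $L$ varies analytically along $M$, so $\mu$ and $\lambda$ are well defined. Then I would simply apply the \citet{chemnitz2024} theorem verbatim to obtain the stated ``only if'' and ``if'' conclusions, for every step-size outside the negligible exceptional set coming from Theorem~\ref{thm:main} (and from the minima-set analysis).

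I expect the main obstacle to be making the ``generic data'' hypothesis precise enough that $M \subseteq D(L)$ and $M$ is genuinely a manifold: a priori $M$ could intersect the non-analytic locus $S(L)$, or fail to be smooth even where $L$ is analytic, and ruling this out for a full-measure set of datasets is exactly the delicate overparameterized issue flagged in the footnote. The honest scope of this corollary is therefore conditional on that genericity statement (which is why it is phrased with ``suppose the data is generic''); everything downstream of it — non-singularity, analytic variation of the Hessian, and the final dichotomy — follows by directly citing Theorem~\ref{thm:main} and \citet{chemnitz2024} with no new calculation. A secondary, more routine point is bookkeeping the exceptional step-sizes: the bad set is the union of the measure-zero set from Theorem~\ref{thm:main} with the (countable, hence measure-zero) set of reciprocals of eigenvalues of $H_L$ along $M$ that appear in the definitions of $\mu$ and $\lambda$, so ``almost all step-sizes'' is preserved.
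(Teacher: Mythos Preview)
Your proposal is correct and matches the paper's approach: the corollary is presented without a standalone proof, simply as the instantiation of the \citet{chemnitz2024} stability theorem once Theorem~\ref{thm:main} supplies non-singularity and the generic-data hypothesis (via \citet{cooper2021}) supplies the manifold structure of $M$. If anything, you are more thorough than the paper, which does not explicitly spell out the need for $M \subseteq D(L)$ or the bookkeeping of exceptional step-sizes; the paper treats these as folded into the phrases ``generic data'' and ``almost all step-sizes'' and defers the details to the cited references.
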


The definitions of $ \mu $ and $ \lambda $ are the same as those in
\citet{chemnitz2024}. For a discussion of ``generic data'' see
\citet{cooper2021}; note that it is reasonable to assume that noisy data is
generic.

We now use this corollary to analyze the training dynamics in a few examples. We
first compute the functions $ \mu $ and $ \lambda $ for the simple setting with
loss function $ (\theta_1, \theta_2) \mapsto 3.51 (1 - \text{ReLU}
(\theta_2\text{ReLU}(\theta_1)))^2 $ (corresponding to a two-layer ReLU neural
network as described in Appendix~\ref{section:experiment}). We then use these
functions to: 1) determine the stability of periodic trajectories under GD as
the step-size $ \eta $ changes (Figure~\ref{fig:bif}), and 2) show that for a
fixed step-size, a global minimum can be stable under GD but unstable under SGD,
and vice-versa (Figure~\ref{fig:gd_sgd}). The next two paragraphs elaborate on this.

\par{\bf Application: periodic trajectories.}
The composition of non-singular maps is
non-singular as well, hence the stability analysis of minima can be extended to
periodic trajectories. The existence of such trajectories can be motivated
twofold: (i) the use of large step-sizes (as incentivized by results from
\citet{li2019, mohtashami2023}) leads to minima being stable only if they are
flat (cf.\ \citet{chemnitz2024});  (ii) it has been empirically observed that
the eigenvalues of the Hessian matrix of the loss
oscillate even after numerous training epochs \citep{cohen2022, cohen2022b},
which means that the trajectory is not converging. Our results provide a
comprehensive framework for dealing with such cases. We illustrate this through
an experiment described in Appendix~\ref{section:experiment}. We
assume a 2-layer ReLU network $ f_\theta(x) =
\text{ReLU}(\theta_2\text{ReLU}(\theta_1x)) $, a quadratic loss, and data
sampled from a linear function. For $ k>0 $, we are interested in $ k $-periodic points, that
is points $ \theta^* \in \R^2 $ such that $ G^k_\eta(\theta^*) = \theta^* $. The
sequence of iterates $ \theta^*, G_\eta(\theta^*), \ldots,
G^{k-1}_\eta(\theta^*) $ is then a periodic trajectory. A periodic
trajectory is stable if there is a region $ U $ around $ \theta^* $ such that
initializing in $ U $ leads to trajectories that stay close to the periodic one.
In Figure~\ref{fig:bif} on the right we plot trajectories with the same initialisation,
converging to different periodic trajectories for different step-sizes. In
Figure~\ref{fig:bif} on the left we plot the bifurcation diagram for periodic trajectories lying on the
diagonal ($ \theta_1 = \theta_2 $). As the step-size increases, new
periodic trajectories are created and the old ones become unstable.

\begin{figure}[ht!]
	\begin{tikzpicture}[scale=0.6,/tikz/background rectangle/.style={fill={rgb,1:red,1.0;green,1.0;blue,1.0}, fill opacity={1.0}, draw opacity={1.0}}, show background rectangle]
		\input{periodic.tikz}
	\end{tikzpicture}
	\tikzset{cross/.style={cross out, draw, minimum
	size=2*(#1-\pgflinewidth), inner sep=0pt, outer sep=0pt}}
	\begin{tikzpicture}
		\definecolor{violet}{rgb}{0.78, 0.129, 0.867};
		\definecolor{brown}{rgb}{0.82, 0.29, 0.0};
		\begin{scope}[xscale=5, yscale=4.93,local bounding box=traj]
			\draw[line width=1, domain=0.85:1.6,color=blue, samples=100] plot (\x, {1/\x)});

			\draw[color=violet!50!white, fill=violet!50!white] (1.218,0.82102) circle
				(0.7pt);

			\node[shape=circle,color=brown!30!white, fill=brown!30!white] (n1) at (0.775,0.775) {1};
			\node[shape=circle,color=brown!30!white, fill=brown!30!white] (n2)
				at (1.125,1.125) {1};
			\draw[color=brown, ->] (n1) to [bend right=45] (n2);
			\draw[color=brown, ->] (n2.west) to [bend left=-45] (n1.north);

			\node[below right=0.05 of n1] {\textcolor{brown}{1}};
			\node[right=0.05 of n2] {\textcolor{brown}{2}};
			\tikzmath{
				function gradx(\x, \y, \e) {
					if \x > 0 then { 
						if \y > 0 then {
							return \x+3.53*\e*\y*(1-\x*\y);
						} else {
							return \x;
						};
					} else {
						return \x;
					};
				};
				function grady(\x, \y, \e) {
					if \x > 0 then { 
						if \y > 0 then {
							return \y+3.53*\e*\x*(1-\x*\y);
						} else {
							return \y;
						};
					} else {
						return \y;
					};
				};
				real \xs, \ys, \xo, \yo, \xn, \yn, \e;
				int \k1, \k2;
				\k1 = 35;
				\k2 = 35;
				\xs = 1.45;
				\ys = 1/\xs+0.1;
				\e = 0.25;
				\xo = \xs;
				\yo = \ys;
				for \i in {1,2,...,\k1} {
					\xn = gradx(\xo, \yo, \e);
					\yn = grady(\xo, \yo, \e);
					{\draw[color=violet, fill=violet] (\xn, \yn) circle (0.2pt);};
					\xo = \xn;
					\yo = \yn;
				};
				\e = 0.325;
				\xo = \xs;
				\yo = \ys;
				for \i in {1,2,...,\k2} {
					\xn = gradx(\xo, \yo, \e);
					\yn = grady(\xo, \yo, \e);
					{\draw[color=brown, fill=brown] (\xn, \yn) circle (0.2pt);};
					\xo = \xn;
					\yo = \yn;
				};
				{\draw[color=red, fill=red, line width=2] (\xs, \ys) node[cross=4pt, red] {};};
			}
		\end{scope}

		\draw[line width=0.6] ($(traj.south west) - (0.1,0.1)$) rectangle 
			($(traj.north east) + (0.1,0.1)$);
		\begin{scope}[on background layer]
			\draw[thin, step=0.7,color=gray!60!white] ($(traj.south west) - (0.1,0.1)$) grid 
			($(traj.north east) + (0.1,0.1)$);
		\end{scope}

		\begin{scope}[scale=5.5, shift={(-0.41,-1.02)}]
			\draw[thin,fill=white,draw=black] (1.18,1.22) rectangle (1.85,1.5);
			\draw[line width=1, color=blue] (1.21, 1.45) -- (1.25, 1.45);
			\node[anchor=mid west] at (1.25, 1.45) {\tiny global minima};
			\draw[color=red, fill=red, line width=2] (1.23, 1.39) node[cross=4pt, red] {};
			\node[anchor=mid west] at (1.25, 1.39) {\tiny initialisation};
			\draw[color=violet, fill=violet] (1.23, 1.33) circle (0.2pt);
			\node[anchor=mid west] at (1.25, 1.33) {\tiny $1$-periodic
				trajectory ( $ \eta = 0.25$)};
			\draw[color=brown, fill=brown] (1.23, 1.27) circle (0.2pt);
			\node[anchor=mid west] at (1.25, 1.27) {\tiny $2$-periodic
				trajectory ( $ \eta = 0.325$)};
		\end{scope}

		\node[above=0.1 of traj.north] {\small Converging and periodic orbits.};
		\draw[color=white] ($(traj.south west) - (0.1,0.12)$) -- ($(traj.south west) -
			(0.1,0.76)$);
	\end{tikzpicture}
	\caption{Left: periodic trajectories for GD on $ L(\theta_1, \theta_2) = 3.53(1 -
		\text{ReLU}(\theta_2 \text{ReLU}(\theta_1)))^2$. As the step-size $ \eta $ increases,
		higher order orbits appear and the lower order ones become unstable. Right:
		for the same initialisation, but different $ \eta $, two trajectories, one that converges and one that oscillates. } 
	\label{fig:bif}
\end{figure}

\par{\bf Stochastic and adaptive variants of GD.} Our theoretical framework robustly
covers SGD and settings where a learning rate schedule is used.
Theorem~\ref{thm:main} establishes the non-singularity of the GD map $ G_{\eta,
i} $ for \emph{any fixed data and any batch size.} An SGD step is mathematically
equivalent to a full GD step performed on a loss function computed over a
mini-batch of data. Since our result holds \emph{irrespective} of the number of
data points used to compute the loss, it directly applies to each step of an SGD
trajectory, where the mini-batch changes at each iteration. The map for each
step is therefore non-singular (for almost all $ \eta $), and the composition of
these maps, which describes the full SGD trajectory, is also non-singular. An
optimisation process using GD over $ T $ steps with a learning rate schedule $
 (\eta_1, \eta_2, \ldots, \eta_T) $ can be described by the composite map $ G =
 G_{\eta_T} \circ \cdots \circ G_{\eta_1} $. This is a composition of
 non-singular maps, hence their composition is also guaranteed to be
 non-singular. Therefore, our analysis provides a rigorous foundation for
 understanding optimization dynamics not just for a fixed step-size, but for any
 practical scheme that relies on composing gradient descent steps.

\par{\bf Application: stable minima of GD vs SGD.}
Results from \citet{wu2018} have shown that GD and SGD can have different stable
global minima. For the setting described above, we can analytically compute the
quantities introduced in \citet{chemnitz2024} which determine stability (see
Appendix~\ref{section:experiment}). The relation between the minima of GD and
SGD can be quite complex, so we illustrate two interesting cases. In
Figure~\ref{fig:gd_sgd} on the left, we see that there are fewer stable global minima for SGD than
for GD. In Figure~\ref{fig:gd_sgd} on the right, we see a setting where the
stable minima for GD and SGD do not overlap at all. It remains an open problem
to determine if the stable minima for GD and SGD can be related in a general setting,
or if such a relation has to be determined on a case-by-case basis.

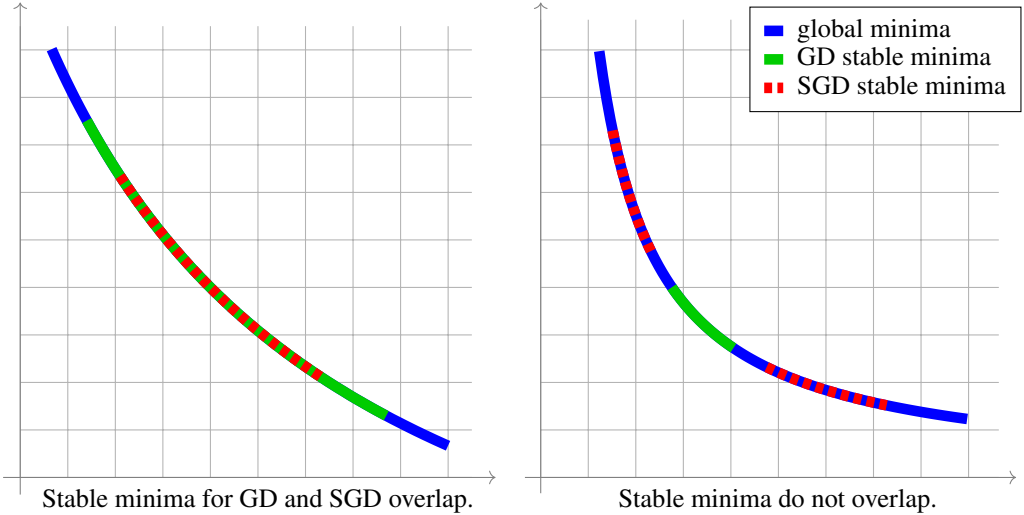
\begin{figure}[ht!]
	\begin{tikzpicture}
		\begin{scope}[line width=4]
			\begin{scope}[scale=6.32, shift={(-0.6,-0.6)}]
				\draw[step=0.1, thin, color=gray!60!white] (0.6,0.6) grid
					(1.55,1.55);
				\draw[->,thin,color=gray] (0.564,0.60) -- (1.6,0.60);
				\draw[->,thin,color=gray] (0.60,0.564) -- (0.60,1.6);
				\node at (1.1,0.55) {Stable minima for GD and SGD overlap.};

				\draw[domain=0.666:1.5,color=blue, samples=100] plot (\x, {1/\x)});

				\draw[domain=0.74:1.37, color=green!80!black, samples=100] plot (\x, {1/\x});

				\draw[densely dashed,domain=0.81:1.24, color=red, samples=100] plot (\x, {1/\x});

			\end{scope}
			\begin{scope}[scale=2.1, shift={(3.295,0)}]
				\draw[step=0.30095, thin, color=gray!60!white] (0.0,0.0) grid
					(2.9,2.9);
				\draw[->,thin,color=gray] (-0.1,0) -- (3.0095,0);
				\draw[->,thin,color=gray] (0,-0.1) -- (0,3.0095);
				\node at (1.5,-0.15) {Stable minima do not
					overlap.};

				\draw[domain=0.3703:2.7,color=blue, samples=100] plot (\x, {1/\x)});

				\draw[domain=0.83:1.22, color=green!80!black, samples=100] plot (\x, {1/\x});

				\draw[densely dashed, domain=0.455:0.7, color=red, samples=100] plot (\x, {1/\x});
				\draw[densely dashed,domain=1.43:2.19, color=red, samples=100] plot (\x, {1/\x});

				\begin{scope}[scale=3.01, shift={(-0.74,-0.51)}]
					\draw[thin,fill=white,draw=black] (1.18,1.28) rectangle (1.75,1.5);
					\draw[color=blue] (1.21, 1.45) -- (1.25, 1.45);
					\node[anchor=mid west] at (1.25, 1.45) {global minima};
					\draw[color=green!80!black] (1.21, 1.39) -- (1.25, 1.39);
					\node[anchor=mid west] at (1.25, 1.39) {GD stable minima};
					\draw[densely dashed, color=red] (1.21, 1.33) -- (1.25, 1.33);
					\node[anchor=mid west] at (1.25, 1.33) {SGD stable minima};
				\end{scope}
			\end{scope}
		\end{scope}
	\end{tikzpicture}
	\caption{Illustrating the different stable minima for GD and SGD. In blue
		are the global minima for $ L(\theta_1, \theta_2) = 3.53(1 -
		\text{ReLU}(\theta_2 \text{ReLU}(\theta_1)))^2$; in green are the minima
		stable for GD; in red those stable for SGD. Left: the stable minima
		for SGD are a proper subset of the stable minima for GD. Right: the stable
		minima for SGD and GD do not intersect. This change is due to different
	step-sizes ( $ \eta_{\text{left}} =  0.15$ vs $ \eta_{\text{right}} =0.3 $)
and different probabilities for generating the data (cf.\
Appendix~\ref{section:experiment}).}
	\label{fig:gd_sgd}
\end{figure}

\par{\bf Different architectures.}
While our results already hold for a diverse number of layers (fully connected,
convolutional or using softmax attention) this does not cover all cases used in
practical settings. An in depth investigation of other architectures may reveal
additional structural properties that make it possible to prove that the neural
network map is analytic a.e.\ for those architectures as well. We think that an
extension to graph and residual neural networks might consist only of routine
applications of the ideas already present in our proofs.  However, for recurrent
neural networks, we see that our techniques are not enough and another approach
is needed. One might approach this setting by viewing the recurrent network $
f:\R^{n_0} \to \R^{n_D} $ as a larger fully connected network $ F:\R^N \to \R^{n_D} $ ($ N >
n_0 $) for which some weights are forced to be equal (cf.
Appendix~\ref{section:architectures}).  Proposition~\ref{prop:ff-fcaea}
guarantees that $ S(F) $, the singularities of $ F $ have measure zero in $ \R^N
$. We have that $ S(f) \subseteq \R^{n_0} \cap S(F) $ (where $ \R^{n_0} $ is viewed as a
subset of $ \R^N $). Hence, if $ \R^{n_0} \cap S(F) $ has measure zero, so must $
S(f) $ too. However, it is not clear that this intersection is negligible
and a more in depth investigation of $ S(F) $ is needed to be sure. Even with
this current limitation of our proof techniques, we conjecture that the GD map
is non-singular for the loss landscape of a recurrent neural network.

\par{\bf Other optimization algorithms.}
The techniques developed in this paper could be applied to other optimization
methods such as mirror descent or proximal point algorithms, potentially
broadening the scope of non-singularity guarantees. These alternative
optimization methods share with GD the core principle of
iteratively updating a solution to minimize a loss function (commonly using only
first-order information like the gradient).
\citet{lee2019} have analysed the convergence properties of such algorithms,
albeit in the same restricted setting they have for GD (i.e.\ requiring the loss
function to be Lipschitz smooth). To extend our results to
different optimization algorithms, one has to modify the proofs in
Section~\ref{section:gd_map_non_sing} of our work. In particular, it suffices to
investigate the case when the empirical loss function $ L $ is analytic, since
once it is proven that the optimizer map is non-singular for such losses, the
same proof as that for Corollary~\ref{cor:gd_on_ae} can be used to extend it to a.e.\
analytic losses.

\section*{Acknowledgements}
\label{section:acknowledgements}
This work has been partially supported by the German Research
Foundation (DFG) through the Priority Program SPP 2298 (project GH 257/2-2). We
also thank the reviewers for their feedback.

\bibliography{general}
\appendix

\newpage

\section{Different Architectures}
\label{section:architectures}
In this short appendix, we expand on the discussion from
Section~\ref{section:setting} regarding the different architectures besides the
ones made up only of fully connected layers.

\par{\bf Convolutional Networks.}
There exist also other kinds of neural network architectures, more specialized
to certain tasks. The first which we introduce are \emph{convolutional} neural
networks \cite{lecun1989}. This kind of networks are different from FF-FC
networks because they have convolutional layers. A detailed description of such
networks can be found in \cite{goodfellow2016}, but for our purposes it suffices
to know that a convolutional layer is such that some entries in the weight
matrix are equal to each other. For example, for a univariate convolution, each
row of the weight matrix is constrained to be equal to the row above shifted by
one element \cite{goodfellow2016}.

\par{\bf Recurrent Networks.}
To better learn time-series, \emph{recurrent} neural networks
\cite{rumelhart1986} have been introduced. The idea behind them is that if the
input to the network is a series $ x = (x_1, \ldots, x_T) $, the network might
benefit by using the intermediate activations it has computed at previous steps
when computing the current activation. More explicitly, if we denote by $
f^{(d)}_t(x_1, \ldots, x_t) $ the output of the $ d $-th layer of the network at
time $ t $, then the output of the $ (d+1) $-th layer of the network at time $
t+1 $ is defined recursively by the following relation
\[
       f^{ (d+1)}_{t+1}(x_1, \ldots, x_{t+1}) = \sigma(W_{d+1}f^{(d)}_{t+1}(x_1,
       \ldots, x_{t+1}) + U_{t+1}f^{ (d+1)}_{t} (x_1, \ldots, x_t) + b_{d+1}),
\]
where $ \sigma $ is applied entry-wise and the base case is $ f_1^{(1)} (x_1) =
\sigma(W_1 f^{ (0)}_0 + U_1 x_1 + b_1)$, with $ f^{ (0)}_0 $ a constant. The new
matrices we introduced $ U_i \in \R^{n_i} \times \R^{n_i} $ represent the recurrent
connections.

In this case, the outputs of the network at any time-step can be used when computing the
empirical loss function. That is, for every input $ x_i $ (we index
different inputs with the subscript $ i $), we have a set $ I_i \subset [T]$ of time
instances, and we can write the empirical loss as
\[
       L(\theta) = \frac{1}{m} \sum_{i=1}^m \frac{1}{|I_i|} \sum_{t\in I_i} l(y_i, f^{ (D)}_t(x_{i,1},
       \ldots, x_{i, t})).
\]

\par{\bf Networks using Attention.}
As an alternative to recurrent networks, \emph{attention} layers have been
developed~\citep{vaswani2017}. Such a layer has three learnable matrices $ W_q,
W_k, $ and $ W_v $, and works as follows: given a sequence $ x_1, \ldots, x_n
\in \R^n_0$, it outputs a sequence $ y_1, \ldots, y_n \in \R^n_D$. Denoting
by $ X $ the input matrix, i.e.\ the matrix having the $ x_i $'s as columns, we
obtain three matrices: $ Q = W_qX, K = W_kX, V=W_vX $ called the queries, keys,
and values, respectively. The output is: 
\[ 
	Attention(Q, K, V) = 	softmax(\frac{QK^T}{\sqrt{n}} )V, 
\] 
where the $ softmax $ function is independently applied to every row of its argument.
The columns of this matrix are the output sequence $ y_1, \ldots, y_n $. After
applying a number of attention layers, we may use an FF-FC network at the end.
This is usually done by concatenating the output sequence into a vector $ y =
y_1 * \ldots * y_n $ and using this as the input to an FF-FC network. 
The important result for our analysis is to observe that for any
input sequence $ x_1, \ldots, x_n $, the map determined by an attention layer on
the parameters $ (W_q, W_k, W_v) \mapsto Attention(Q, K, V) $ is
\emph{analytic}.

\section{Proofs of Main Results}
\label{section:proof}
In this appendix we provide the full proofs of Propositions~\ref{prop:ff-fcaea}
and~\ref{prop:nnaea}. There is one difference from the result stated in the main
body of the text. Instead of proving the result for a.e.\ analyticity, we prove
a more general result, showing that the smoothness properties of the activation
function carry over to the neural network map almost everywhere. What we mean by
this is that if the activation function is piecewise $ \mathcal{C}^k $, where $
k $ can be a positive integer (denoting $ k $-times differentiable functions), $
\infty $ (denoting smooth functions), or $ \omega $ (denoting analytic
functions), then the neural network function is also almost everywhere $
\mathcal{C}^k $. The definition of an almost everywhere $ \mathcal{C}^k $
function is analogous to Definition~\ref{def:aea}. We begin by restating 
Proposition~\ref{prop:ff-fcaea}. 

\begin{proposition}[\bf Analogue of chain rule for fully connected layers]
	\label{prop:chnn}
	Let $ D > 0 $ be a positive number and $ \{\sigma_i:\R^{n_i} \to
	\R^{n_i}\}_{i=1}^D $ be a collection of a.e. $ \mathcal{C}^k $ maps with the property
	that $ \partial Z(\sigma_i) $ has measure zero and $ \alpha \in \R^{n_0} $ a
	vector. Then the map $ f_D $ defined recursively by
	\begin{align*}
		f_d:\R^{n_1\times n_0}\times \cdots \times\R^{n_d\times n_{d-1}} &\to
		\R^{n_d}\\
		(W_1, \ldots, W_d) &\mapsto \sigma_d(W_df_{d-1}(W_1, \ldots, W_{d-1})),
	\end{align*}
	with $ f_1(W_1) = \sigma_1(W_1\alpha)$ is $ \mathcal{C}^k $ almost everywhere and the set $
	\partial Z(f_D) $ has Lebesgue measure zero.
\end{proposition}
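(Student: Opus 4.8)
The plan is to argue by induction on $d$, following the structure of the proof sketch of Proposition~\ref{prop:ff-fcaea} but carrying the regularity class $\mathcal{C}^k$ through unchanged. Throughout I will use that a submersion is non-singular; the version needed is for a smooth map $\R^N \to \R^n$ with $N \ge n$, which is the obvious generalisation of the ``submersions are non-singular'' lemma and follows from the local normal form of a submersion together with Fubini's theorem. For the base case $d=1$: if $\alpha = 0$ then $f_1 \equiv \sigma_1(0)$ is constant and both assertions are trivial, so assume $\alpha \neq 0$. Then $m\colon W_1 \mapsto W_1\alpha$ is a surjective linear map, hence a submersion, hence non-singular. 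Since $f_1 = \sigma_1 \circ m$ with $m$ smooth, $f_1$ is $\mathcal{C}^k$ on the open full-measure set $m^{-1}(D(\sigma_1))$, so $S(f_1) \subseteq m^{-1}(S(\sigma_1))$ is null; and $Z(f_1) = m^{-1}(Z(\sigma_1))$ with $\partial Z(f_1) \subseteq m^{-1}(\partial Z(\sigma_1))$ by continuity of $m$, which is null since $m$ is non-singular and $\partial Z(\sigma_1)$ is null.

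For the inductive step, assume $f_{d-1}$ is a.e.\ $\mathcal{C}^k$ with $\partial Z(f_{d-1})$ null, and set $g(W_1,\dots,W_d) = W_d\, f_{d-1}(W_1,\dots,W_{d-1})$, so $f_d = \sigma_d \circ g$. As in the sketch, partition $\mathrm{dom}(f_{d-1})$ into the null ``bad'' set $B = S(f_{d-1}) \cup \partial Z(f_{d-1})$, the open set $\mathrm{int}\,Z(f_{d-1})$, and $P = \{f_{d-1} \neq 0\} \cap D(f_{d-1})$; one checks this is a genuine partition, using that $\mathrm{int}\,Z(f_{d-1}) \subseteq D(f_{d-1})$ (on which $f_{d-1}\equiv 0$, hence $\mathcal{C}^k$) and that every zero of $f_{d-1}$ outside $\mathrm{int}\,Z(f_{d-1})$ lies in $\partial Z(f_{d-1})$. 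Correspondingly $\mathrm{dom}(f_d)$ is partitioned into the null set $B \times \R^{n_d\times n_{d-1}}$ (null by Fubini), the open set $W := \mathrm{int}\,Z(f_{d-1}) \times \R^{n_d\times n_{d-1}}$, and the open set $V := P \times \R^{n_d\times n_{d-1}}$. On $W$ we have $g \equiv 0$, so $f_d \equiv \sigma_d(0)$ is $\mathcal{C}^\infty$. On $V$ the map $g$ is $\mathcal{C}^k$ and, crucially, a submersion: at any point of $V$ the partial differential of $g$ in $W_d$ is the linear map $H \mapsto H\, f_{d-1}(W_1,\dots,W_{d-1})$, which is onto $\R^{n_d}$ precisely because $f_{d-1}(W_1,\dots,W_{d-1}) \neq 0$ there. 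Hence $g|_V$ is non-singular, so $(g|_V)^{-1}\!\big(S(\sigma_d)\cup\partial Z(\sigma_d)\big)$ is null. On the open set $V \setminus (g|_V)^{-1}(S(\sigma_d))$ the map $g$ lands in $D(\sigma_d)$, so $f_d = \sigma_d \circ g$ is $\mathcal{C}^k$ there. Thus $f_d$ is $\mathcal{C}^k$ on the open set $W \cup \big(V \setminus (g|_V)^{-1}(S(\sigma_d))\big)$, whose complement lies in $\big(B\times\R^{n_d\times n_{d-1}}\big) \cup (g|_V)^{-1}(S(\sigma_d))$ and is hence null; so $f_d$ is a.e.\ $\mathcal{C}^k$.

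To control $\partial Z(f_d)$, intersect it with the three pieces of the partition. On $B\times\R^{n_d\times n_{d-1}}$ it is null. It is disjoint from $W$: there $f_d$ is constant, so $W$ lies entirely in $\mathrm{int}\,Z(f_d)$ (if $\sigma_d(0)=0$) or in the complement of $\overline{Z(f_d)}$ (if $\sigma_d(0)\neq 0$). On the open set $V$ one has $Z(f_d)\cap V = (g|_V)^{-1}(Z(\sigma_d))$, and since $V$ is open and $g|_V$ continuous, $\partial Z(f_d)\cap V = \partial_V\big((g|_V)^{-1}(Z(\sigma_d))\big) \subseteq (g|_V)^{-1}(\partial Z(\sigma_d))$, which is null because $g|_V$ is non-singular and $\partial Z(\sigma_d)$ is null. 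Hence $\partial Z(f_d)$ is null, closing the induction. (Entrywise application of a univariate piecewise-$\mathcal{C}^k$ activation gives the hypothesis $\partial Z(\sigma_i)$ null that is fed in.)

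The main obstacle is precisely the one flagged in Remark~\ref{ex:crf}: composing with $\sigma_d$, which is only a.e.\ $\mathcal{C}^k$, would in general destroy a.e.\ regularity, since $g^{-1}(S(\sigma_d))$ could be large. What makes it work is that the fresh weight matrix $W_d$ renders $g$ a submersion on the region $\{f_{d-1}\neq 0\}$, so non-singularity pulls the bad sets $S(\sigma_d)$ and $\partial Z(\sigma_d)$ back to null sets there; the complementary region where $f_{d-1}$ vanishes contributes only the harmless locally-constant piece $W$ and the null set where $f_{d-1}$ is singular or on $\partial Z(f_{d-1})$. The care in the argument is entirely in verifying that this partition is exhaustive up to a null set, that each leftover piece is negligible, and in propagating the hypothesis on $\partial Z(\cdot)$ — not merely on $S(\cdot)$ — since that is exactly what the next layer consumes.
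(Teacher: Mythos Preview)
Your proof is correct and follows essentially the same approach as the paper's: induction on depth, the same three-way partition of $\mathrm{dom}(f_{d-1})$ into the null ``bad'' set $B$, the interior zero set, and the ``nice'' nonzero set, and the key observation that the map $g=(x_{d-1},W_d)\mapsto W_d f_{d-1}(x_{d-1})$ is a submersion wherever $f_{d-1}\neq 0$. The only packaging difference is that the paper establishes measure-zero of the bad set $\Delta=(g|_V)^{-1}(S(\sigma_d))$ by a direct slice-wise Fubini argument and then separately computes the full differential of $g$ to handle $\partial Z(f_d)$, whereas you invoke the submersion property of $g|_V$ once (via the $W_d$-partial alone) and use the resulting non-singularity uniformly for both conclusions; this is a minor streamlining, not a different idea.
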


\begin{proof}
	We will prove this proposition by induction on $ D $.  If $ D=1 $,  we have
	that $ f_1(W_1) = \sigma_1 (W_1\alpha) $.

	If $ \alpha = 0 $, then $ f_1 (W_1) = \sigma_1 (0) $ is a constant function,
	hence it is $ \mathcal{C}^k $.  Also, we have that $ Z(f_1) $ is either the whole
	domain of $ f_1 $ or the empty set, hence $ \partial Z(f_1) $ has measure
	zero.

	In the case $ \alpha\neq 0 $, the set of points $ S(f_1) = \{W_1\alpha \in
	S(\sigma_1)\} $ (\emph{a priori} this set need not be equal to the set of points where $
	f_1$ is \emph{not} $ \mathcal{C}^k $; we show in this paragraph that they
	\emph{do} coincide) has measure zero and is also closed.  This follows since the
	multiplication map $ M_1: W_1 \mapsto W_1\alpha $ is non-singular. With
	this definition, we have that $ f_1 $ is the composition $ f_1 = \sigma_1 \circ
	M_1 $. Taking any $ W_1 \not\in S(f_1) $, we have that $ M_1 $ is $
	\mathcal{C}^k $ in a neighborhood of $ W_1 $ and that $ \sigma_1 $ is $ \mathcal{C}^k $ in a
	neighborhood of $ W_1\alpha = M_1(W_1) $, hence we can apply the chain
	rule to conclude that $ f_1 $ is $ \mathcal{C}^k $ in a neighborhood of $ W_1 $. We
	see that $ f_1 $ is $ \mathcal{C}^k $ in the complement of $ S(f_1) $. But $ S(f_1)
	$ is a closed set and has measure zero, hence $ f_1 $ is $ \mathcal{C}^k $ almost
	everywhere.

	We now show that $ \partial Z(f_1) $ has measure zero. Looking at the set $
	Z(f_1) = \{W_1 | W_1\alpha \in Z(\sigma_1)\}$, we see that $ Z(f_1) =
	M_1^{-1}(Z(\sigma_1)) $. We also have that $ W \in \partial Z(f_1) $ implies
	that $ M_1(W) \in \partial Z(\sigma_1) $. Hence, we see that $ \partial
	Z(f_1) \subset M_1^{-1}(\partial Z(\sigma_1)) $. Since $ M_1 $ is a
	non-singular map (it is a submersion), it follows that $ \partial
	Z(f_1) $ has measure zero.

	Now we proceed to the induction step. Assume that $ f_{D-1} $ is $ \mathcal{C}^k $ a.e.\ and
	that $ \partial Z(f_{D-1}) $ has measure zero. Define the following sets:
	\begin{enumerate}
		\item
			The set of ``bad'' points of $ f_{D-1} $:
			\[
				B(f_{D-1}) = \partial Z(f_{D-1}) \cup S(f_{D-1}).
			\]
			By assumption, we have that $ B(f_{D-1}) $ is a measure zero set.
		\item
			The set of ``nice'' points of $ f_{D-1} $ which are not roots:
			\[
				N(f_{D-1}) = \text{dom}(f_{D-1})\setminus \left(B(f_{D-1}) \cup
				\text{int}(Z(f_{D-1}))\right),
			\]
			where $ \text{dom}(f_{D-1}) $ is the domain of $ f_{D-1} $.
	\end{enumerate}

	We also use the following notation
	\[
		x_D = (W_1, \ldots, W_D)  \text{ and } y_{D-1} = W_Df_{D-1} (W_1, \ldots, W_{D-1}).
	\]

	We now consider two cases:
	\begin{enumerate}
		\item \label{item:g1}
			If $ x_{D-1} \in N(f_{D-1}) $, then take any $ W_D $
			such that $ W_Df_{D-1}(x_{D-1}) \not\in S(\sigma_D) $. Such
			a $ W_D $ exists since $ f_{D-1}(x_{D-1}) $ is not
			zero. In fact, the complement of the set of all such points has
			measure zero since it is the preimage of the set $ S(\sigma_D) $ under
			the multiplication map $ M_D: W_D \mapsto W_Df_{D-1}(x_{D-1})$,
			which is a non-singular map.

			We will write $ f_D $ as a composition of maps and then show that we
			can use the chain rule to conclude that $ f_D $ is $ \mathcal{C}^k $ in a
			neighborhood of $ x_D $. Thus, consider the following maps:
			\begin{align*}
				f_{D-1} \times \id: \R^{n_1\times n_0}\times \ldots \times
				\R^{n_D\times n_{D-1}} &\to \R^{n_{D-1}}\times \R^{n_D\times
				n_{D-1}}\\
					(W_1, \ldots, W_D) &\mapsto (f_{D-1} (W_1, \ldots, W_{D-1}),
					W_D)
			\end{align*}
			\begin{align*}
				M_D: \R^{n_{D-1}} \times \R^{n_D\times n_{D-1}} &\to \R^{n_D}\\
				(x_{D-1}, W_D) &\mapsto W_Dx_{D-1}.
			\end{align*}

			We see that $ f_{D} $ can be written as the composition $ \sigma_D
			\circ M_D \circ (f_{D-1}\times \id)$. By assumption, we have that $
			f_{D-1}\times \id$ is $ \mathcal{C}^k $ in a neighborhood of $ x_D $; $ M_D $
			is $ \mathcal{C}^k $ in its whole domain, in particular in an open
			neighborhood of $ (f_{D-1}(x_{D-1}), W_D) $; and also $ \sigma_D $ is $
			\mathcal{C}^k $ on an open neighborhood of $ y_D $ by assumption. Hence, we can
			apply the chain rule to conclude that $ f_D $ is $ \mathcal{C}^k $ in an open
			neighborhood of $ x_D $.

		\item \label{item:g2}
			If $ x_{D-1} \in \text{int}(Z(f_{D-1}))$, then we can use a
			different approach. Since it is an interior point, there exists an
			open neighborhood $ U \ni x_{D-1} $ such that $ U \subset
			\text{int} (Z(f_{D-1})) $. We immediately see that on this
			neighborhood $ f_D $ is constant, hence it is $ \mathcal{C}^k $.
	\end{enumerate}

	We can now describe the set of points where the previous two cases guarantee
	that $ f_D $ is differentiable. Start by defining $ \Delta $ to be the set
	\[
		\Delta = \{ (x_{D-1}, W_D) \in N(f_{D-1}) \times \R^{n_D\times n_{D-1}} |
		W_Df_{D-1} (x_{D-1}) \in S(\sigma_D)\}.
	\]
	It is closed since $ S(\sigma_D) $ is closed and the map $ M_D:(x_{D-1}, W_D) \mapsto
	W_Df_{D-1} (x_{D-1})$ is continuous. Since this map is measurable and so is
	$ S(\sigma_D) $, it follows that $ \Delta $ is also measurable. In the first
	case above, we
	proved that for any $ x_{D-1} \in N(f_{D-1}) $ the intersection $ \Delta \cap
	\{x_{D-1}\} \times \R^{n_D \times n_{D-1}}$ has measure zero. Use can use a
	special case of the Fubini's theorem to conclude that $ \Delta $ has measure zero as well.

	In case~\ref{item:g1} we prove that $ f_D $ is $ \mathcal{C}^k $ on $
	(N(f_{D-1}) \times \R^{n_D \times n_{D-1}}) \setminus \Delta $. In
	case~\ref{item:g2}, we showed that $ f_D $ is $ \mathcal{C}^k $ on $
	\text{int} (Z(f_{D-1}))\times \R^{n_D \times n_{D-1}} $. Putting everything
	together, we see that $ F $ is $ \mathcal{C}^k $ on an open set whose
	complement has measure zero, namely $ \Delta \cup B(f_{D-1}) \times  \R^{n_D
	\times n_{D-1}}$. This shows that $ f_D $ is $ \mathcal{C}^k $ almost
	everywhere.

	We now prove the second part, namely that $ \partial Z(f_{D}) $ has measure
	zero. We have that $ Z(f_D)  = \{ (x_{D-1}, W_D) | W_Df_{D-1} (x_{D-1}) \in
	Z(\sigma_D)\} $. Start by observing that if $ x_D \in \partial Z(f_D) $, then $
	y_D \in \partial Z(\sigma_D)$. This follows since $ M_D $ is a continuous map.
	This means that $ \partial Z(f_D) \subset M_D^{-1}(\partial Z(\sigma_D)) $. We
	consider three cases:
	\begin{enumerate}
		\item
			If $ x_{D-1} \in  N(f_{D-1}) $, we can look at the derivative of $
			M_D $ at a point $ (x_{D-1}, W_D) \in N(f_{D-1}) \times \R^{n_D
			\times n_{D-1}} $.  For any $ ( \bar{x}_{D-1}, \bar{W}_D) $ in the
			tangent space at $ x_D $, we have:
			\[
				d_{x_D}M( \bar{x}_{D-1}, \bar{W}_D) =
				\bar{W}_Df_{D-1}(x_{D-1}) + W_Dd_{x_{D-1}}f_{D-1}(
				\bar{x}_{D-1}).
			\]
			It is surjective since $ f_{D-1} (x_{D-1}) \neq 0 $. Hence, $ M $ is
			non-singular on $ N(f_{D-1}) \times \R^{n_D \times n_{D-1}}$. Since
			$ \partial Z(\sigma_D) $ has measure zero, we have that $ N(F) \times
			\R^{n_D \times n_{D-1}} \cap \partial Z(f_D) $ also has measure
			zero.

		\item
			If $ x_{D-1} \in \text{int}( Z(f_{D-1})) $, then $ W_Df_{D-1}
			(x_{D-1}) $ is constant for any $ W_D $. Thus, either all points $
			y_{D-1} $ are roots of $ \sigma_D $ or none. In both cases the set $
			\text{int}(Z(f_{D-1})) \times \R^{n_D \times n_{D-1}} \cap \partial Z(f_D) $ 
			has measure zero.

		\item
			If $ x_{D-1} \in B(f_{D-1}) $, then the set $ B(f_{D-1}) \times
			\R^{n_D \times n_{D-1}}$ has measure zero, hence its intersection
			with $ \partial Z(f_D) $ also has measure zero.

	\end{enumerate}
	Since the domain of $ f_D $ is a disjoint union of the three sets we have
	considered in the previous cases and since the intersection of $
	\partial Z(f_D) $ with each one of them has measure zero, it follows that $
	\partial Z(f_D)$ has measure zero itself. This concludes the proof.
\end{proof}

\begin{remark}[\bf Why the bias can be dropped]
	\label{rem:drop-bias}
	Including bias terms changes the linear transformation in each layer to an
	affine one: from $ W\alpha $ to $ W\alpha + b $. The proof of non-singularity relies on this map being a
	submersion (i.e., its derivative being surjective). The derivative of the affine map is
	surjective, just as it is for the map without bias. In fact, the presence of the
	additive bias term makes the surjectivity argument even more direct. The rest of
	the inductive proof in Proposition~\ref{prop:chnn}  proceeds in exactly the same manner.
\end{remark}

\begin{proposition}[\bf Analogue of chain rule for convolutional layers]
	\label{prop:conv}
	Let all the assumptions and definitions in Proposition~\ref{prop:chnn} hold
	true. However, suppose that some entries in the matrix $ W_D $ are forced to
	be equal to each other. Then, for generic data, the map $ F_D $ is a.e.\ $ \mathcal{C}^k $
	and the set $ \partial Z(F_D) $ has measure zero.
\end{proposition}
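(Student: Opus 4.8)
The plan is to re-run the induction of the proof of Proposition~\ref{prop:chnn} essentially verbatim, changing only the treatment of the outermost layer. Write $\mathcal{W}\subseteq\R^{n_D\times n_{D-1}}$ for the linear subspace of matrices whose prescribed entries coincide --- the ``convolutional subspace'' --- so that $W_D$ now ranges over $\mathcal{W}$ rather than over all of $\R^{n_D\times n_{D-1}}$. Inspecting that proof, the only place where ``$W_D$ is an arbitrary matrix'' is actually used is that, for a fixed $x_{D-1}\in N(f_{D-1})$ (so $f_{D-1}(x_{D-1})\neq 0$), the multiplication map $M_D\colon W_D\mapsto W_D f_{D-1}(x_{D-1})$ is non-singular onto $\R^{n_D}$: this enters both in the Fubini argument bounding $\Delta$ and in the step showing $\partial Z(F_D)$ is negligible. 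So the whole proof reduces to understanding the restricted linear map $M_D^{v}\colon\mathcal{W}\to\R^{n_D}$, $W\mapsto Wv$, for $v=f_{D-1}(x_{D-1})$.

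First I would analyse $M_D^{v}$. Since $\sigma_D$ acts entrywise, $S(\sigma_D)\subseteq\bigcup_{i=1}^{n_D}\{z\in\R^{n_D}:z_i\in B\}$ and $\partial Z(\sigma_D)\subseteq\bigcup_{i=1}^{n_D}\{z:z_i\in\partial Z(\sigma)\}$, with $B\subset\R$ and $\partial Z(\sigma)\subset\R$ of measure zero. For each output coordinate $i$ and each $c\in\R$ the set $(M_D^{v})^{-1}(\{z_i=c\})=\{W\in\mathcal{W}:(Wv)_i=c\}$ is an affine hyperplane of $\mathcal{W}$, hence negligible, \emph{unless} the linear functional $W\mapsto(Wv)_i$ vanishes identically on $\mathcal{W}$ --- and in that degenerate case the preimage is empty if $c\neq 0$ and all of $\mathcal{W}$ if $c=0$. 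For a convolutional layer $W\mapsto(Wv)_i$ is a fixed combination of the entries of $v$ indexed by the receptive field $J_i$ of output $i$, so it vanishes on $\mathcal{W}$ exactly when $v$ lies in the coordinate subspace $V_i=\{v:v_j=0\ \text{for all}\ j\in J_i\}$; set $V=\bigcup_i V_i$, a finite union of proper coordinate subspaces. Consequently, whenever $f_{D-1}(x_{D-1})\notin V$, the sets $(M_D^{v})^{-1}(S(\sigma_D))$ and $(M_D^{v})^{-1}(\partial Z(\sigma_D))$ are negligible in $\mathcal{W}$, and (using the derivative formula for $M_D$ in the proof of Proposition~\ref{prop:chnn} together with the $\bar W_D$-directions) the full multiplication map is a submersion there; so that proof goes through unchanged over $\{f_{D-1}(x_{D-1})\notin V\}$. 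Note that if $0\notin B$ (the activation has no kink at the origin, e.g.\ sigmoid) the degenerate case never arises and there is nothing more to do.

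It remains to handle the locus $\{f_{D-1}(x_{D-1})\in V\}$. Here the offending outputs turn out to be harmless: if $f_{D-1}(x_{D-1})\in V_i$, then $(F_D)_i=\sigma_D\bigl((W_D f_{D-1})_i\bigr)$ depends only on the coordinates $(f_{D-1})_j$, $j\in J_i$, all of which vanish at $x_{D-1}$, so on the relatively open set where they remain zero, namely $\text{int}\bigl(\bigcap_{j\in J_i}Z_j(f_{D-1})\bigr)\times\mathcal{W}$ with $Z_j(f_{D-1})=\{x:(f_{D-1}(x))_j=0\}$, the coordinate $(F_D)_i$ is constant and hence $\mathcal{C}^k$. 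The only remaining set is $\Bigl(\bigcup_i\partial\bigl(\bigcap_{j\in J_i}Z_j(f_{D-1})\bigr)\Bigr)\times\mathcal{W}$, and this is exactly where genericity of the data is invoked: for generic data the boundary of the joint zero-locus of any prescribed sub-collection of coordinates of $f_{D-1}$ has measure zero. I would obtain this by strengthening the inductive invariant of Proposition~\ref{prop:chnn} so that it also records that such joint-zero-boundaries are negligible, the base case being handled by the hypothesis that $\partial Z(\sigma_1)$ is negligible together with non-singularity of $W_1\mapsto W_1\alpha$; iterating this so that the layers feeding $F_D$ may themselves be convolutional is what makes the genericity assumption genuinely necessary. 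Combining the two loci then yields that $F_D$ is $\mathcal{C}^k$ off a negligible set and that $\partial Z(F_D)$ is negligible, as claimed.

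The main obstacle is precisely this last step. In the fully connected case one may simply quote that $M_D$ is everywhere a submersion, so no separate genericity input is needed and no positive-measure exceptional locus appears; but a convolutional output can be frozen at $\sigma_D(0)$ on a set of positive measure (a polyhedral cone when $\sigma$ is ReLU-like), and once the layers feeding $F_D$ carry their own weight-sharing the ``free rows'' submersion arguments break down. One must then show that the transition locus between ``frozen'' and ``active'' behaviour of each block of coordinates of $f_{D-1}$ is lower-dimensional, and pinning down the exact genericity condition on the data that guarantees this --- and propagating it through the induction --- is where the real work lies.
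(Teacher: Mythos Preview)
Your plan is sound but considerably more elaborate than the paper's. The paper's proof is a single short paragraph: it writes $C_D$ for the convolutional subspace, notes that $M_c^{-1}(B)=M^{-1}(B)\cap C_D$ where $M$ is the unrestricted multiplication on $\R^{n_D\times n_{D-1}}$, observes that $M^{-1}(B)$ is null in the ambient space since $M$ is a submersion, and then simply asserts that this intersection has measure zero inside $C_D$ ``for a generic choice of data'' (pathological data that align $C_D$ with $M^{-1}(B)$ being themselves null). With $M_c$ declared non-singular, the paper says the proof of Proposition~\ref{prop:chnn} applies verbatim. Your coordinate-by-coordinate analysis through the receptive-field subspaces $V_i$ is essentially a rigorous unpacking of that one-sentence genericity claim: you locate exactly when the functional $W\mapsto(Wv)_i$ degenerates, handle the frozen outputs on the interior of the joint zero locus, and then push the genericity requirement down to the boundaries $\partial\bigl(\bigcap_{j\in J_i}Z_j(f_{D-1})\bigr)$ via a strengthened inductive invariant. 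This buys you precision---in particular, you correctly flag that positive-measure frozen regions (inevitable for ReLU-type activations) are not themselves obstructions, and that the paper's intersection claim cannot hold for \emph{arbitrary} null $B$ once $\dim C_D<n_D$---whereas the paper's route buys brevity by absorbing all of this into the phrase ``generic data'' without spelling out what happens at the inductive step where the role of $\alpha$ is played by $f_{D-1}(x_{D-1})$ rather than the raw input.
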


\begin{proof}
	Let $ C_D $ be the space spanned by the weight matrices $ W_D $. We must
	consider now the multiplication map as a map
	\[ 
		M_c: W_D \to W_D\alpha, \text{ with domain } C_D.
	\]

	We now show that $ M_c $ is non-singular. Observe that for any measure zero
	set $ B $ in the codomain, the preimage is $ M_c^{-1}(B) = M^{-1}(B) \cap
	C_D $, where $ M $ is the multiplication map on the full ambient space $
	R^{n_D \times n_{D-1}} $. Since $ M $ is a submersion (this was shown in the
	proof of Proposition~\ref{prop:chnn}), $ M^{-1}(B) $ has measure zero. The
	intersection of a measure zero set with a lower-dimensional subspace $ C_D $
	will also have measure zero within $ C_D $ for a generic choice of data.
	Such a choice is justified if the data is assumed to be noisy. While a
	pathological choice of data could align $ C_D $ with $ M^{-1} (B) $, such
	data points form a measure zero set themselves. 

	From here on, the proof of Proposition~\ref{prop:chnn} applies verbatim. 
\end{proof}

\begin{proposition}[\bf Analogue of chain rule for softmax attention]
	\label{prop:attention}
	Let $ \{ f_i:\R^{n_\theta} \to \R^{n_{D-1}} \}_{i=1}^{m}$ be a collection of
	a.e.\ $ \mathcal{C}^k $ maps with $ \partial Z(f_i) $ having
	measure zero for every $ i $. Let $ Y(\theta) $ be
	the matrix with $ f_i(\theta) $'s as columns. Then, the map 
	\[ 
		F: (\theta, W_k, W_q, W_v) \mapsto Attention(W_kY(\theta), W_qY(\theta),
		W_vY(\theta)) 
	\]
	is a.e.\ $ \mathcal{C}^k $ and the set $ \partial Z(F) $ has measure zero.
\end{proposition}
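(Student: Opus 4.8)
The plan is to run the same induction-step argument as in the proof of Proposition~\ref{prop:chnn}, but to exploit that, unlike a piecewise analytic activation, the softmax map has \emph{no} singularities. First I would observe that since $z\mapsto\exp(z)$ is entire and $\sum_j\exp(z_j)>0$ for every vector $z$, the softmax function is real-analytic on all of its domain; composing it with the polynomial maps $(Q,K)\mapsto QK^{\top}/\sqrt n$ and $S\mapsto SV$ shows that $A\colon (Q,K,V)\mapsto\mathrm{Attention}(Q,K,V)$ is real-analytic everywhere, so $S(A)=\emptyset$ and $A$ is (globally) $\mathcal{C}^k$. Writing $Y(\theta)$ for the matrix with columns $f_i(\theta)$, putting $D(Y)=\bigcap_i D(f_i)$ (open, with null complement by hypothesis), and setting $U=D(Y)\times\R^{N}$ with $N$ the number of entries of $(W_k,W_q,W_v)$ --- an open set of full measure --- the map $\Psi\colon(\theta,W_k,W_q,W_v)\mapsto(W_kY(\theta),W_qY(\theta),W_vY(\theta))$ is $\mathcal{C}^k$ on $U$ (polynomial in the weights, as smooth as $Y$ in $\theta$). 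Since $F=A\circ\Psi$, the chain rule gives that $F|_U$ is $\mathcal{C}^k$, and as $U^c$ is null this yields that $F$ is a.e.\ $\mathcal{C}^k$. That is the easy half.

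For $\partial Z(F)$, I would first discard $\partial Z(F)\cap U^c$ (null) and then bound $\partial Z(F)\cap U$. If $p=(\theta,W_k,W_q,W_v)\in\partial Z(F)\cap U$, then $p$ is a limit of zeros of $F$ that eventually lie in $U$, so continuity gives $\Psi(p)\in\overline{Z(A)}$; since $A$ is real-analytic on a connected domain and not identically zero (softmax never vanishes, so $A(0,0,V)\neq0$ for suitable $V$), the set $Z(A)$ is closed, of measure zero, and has empty interior, hence $\partial Z(A)=Z(A)$ and $\partial Z(F)\cap U\subseteq\Psi^{-1}(Z(A))$. Then I would split on whether $Y(\theta)=0$. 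On $\{Y(\theta)\neq0\}$: for fixed $(\theta,W_k,W_q)$ the matrix $S:=\mathrm{softmax}\bigl(W_kY(\theta)Y(\theta)^{\top}W_q^{\top}/\sqrt n\bigr)$ is a fixed row-stochastic (hence nonzero) matrix and $A(\Psi(p))=0$ becomes the \emph{linear} equation $SW_vY(\theta)=0$ in $W_v$; the map $W_v\mapsto SW_vY(\theta)$ is nonzero (pick a nonzero row of $Y(\theta)$ and a nonzero column of $S$), so its kernel is a proper subspace of $W_v$-space and hence null, and Fubini sliced over the $W_v$-coordinate shows $\Psi^{-1}(Z(A))\cap\{Y(\theta)\neq0\}$ is null (measurability is immediate: $Z(A)$ closed, $\Psi|_U$ continuous). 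On $\{Y(\theta)=0\}$, i.e.\ $\theta\in\bigcap_i Z(f_i)$: if $\theta$ were interior to $\bigcap_i Z(f_i)$ then a neighborhood of $p$ would lie in $Z(F)$, contradicting $p\in\partial Z(F)$, so $\theta\in\partial\bigl(\bigcap_i Z(f_i)\bigr)\subseteq\bigcup_i\partial Z(f_i)$, which is null; hence this part of $\partial Z(F)\cap U$ sits inside $\bigl(\bigcup_i\partial Z(f_i)\bigr)\times\R^{N}$. Combining the three pieces gives that $\partial Z(F)$ is null.

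The hard part will be exactly this last dichotomy. The map $\Psi$ is \emph{not} a submersion in general --- that would need $Y(\theta)$ to have full column rank, which can fail (for instance when there are more tokens than the hidden width) --- and, worse, $\Psi$ collapses the whole fiber over $\{Y(\theta)=0\}$ to the origin of $(Q,K,V)$-space, which lies in $Z(A)$, so one cannot simply quote ``non-singular maps pull back null sets to null sets''. The way around it is to use the softmax structure twice: $S\neq0$ makes the equation defining $Z(F)$ a nontrivial \emph{linear} equation in $W_v$, so Fubini in the $W_v$-direction disposes of $\{Y(\theta)\neq0\}$; and the fact that $F\equiv0$ on $\mathrm{int}\bigl(\bigcap_i Z(f_i)\bigr)$ confines the collapsed fiber to $\mathrm{int}(Z(F))$, leaving only the null boundary $\partial\bigl(\bigcap_i Z(f_i)\bigr)$. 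The remaining steps --- real-analyticity of softmax and the chain rule on $U$ --- are routine, and this gives the inductive step that lets attention layers be incorporated into Proposition~\ref{prop:nnaea}.
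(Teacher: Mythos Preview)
Your proposal is correct and shares the paper's skeleton: the a.e.\ $\mathcal{C}^k$ claim is obtained by the chain rule on $D(Y)\times\R^N$ exactly as in the paper, and the $\partial Z(F)$ bound goes through the containment $\partial Z(F)\cap U\subseteq\Psi^{-1}(Z(A))$ followed by a Fubini slice over one of the weight matrices. Where you diverge is in how the slice is controlled. The paper invokes Lemma~\ref{lem:measure_zero_set_analytic} to get $Z(A)$ null, asserts that the multiplication map $M:(Y,W_k,W_q,W_v)\mapsto(W_kY,W_qY,W_vY)$ is a submersion with ``hyperplane'' preimages, and pulls back through $f$. You instead bypass Lemma~\ref{lem:measure_zero_set_analytic}: slicing over $W_v$, you use that the softmax output $S$ has strictly positive entries, so whenever $Y(\theta)\neq 0$ the linear map $W_v\mapsto SW_vY(\theta)$ is nonzero and its kernel is a proper subspace. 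You then treat $\{Y(\theta)=0\}$ separately via the interior/boundary dichotomy on $\bigcap_i Z(f_i)$, mirroring the ``nice zeros'' versus ``bad points'' split in Proposition~\ref{prop:chnn}. This dichotomy is the real gain: the paper's submersion claim fails at $Y=0$ (and can fail for $Y\neq 0$ too, when the token count exceeds the hidden width), and the fiber over $\{Y(\theta)=0\}$ collapses entirely into $Z(A)$ --- exactly the obstruction you name and then remove using the hypothesis on $\partial Z(f_i)$. Your route is thus more self-contained and patches a soft spot in the paper's sketch; one small redundancy is that you invoke real-analyticity of $A$ to get $\overline{Z(A)}=Z(A)$, but continuity of $A$ already gives that $Z(A)$ is closed.
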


In the proposition above, one should think of the $ f_i $'s as representing the
output of a \emph{fixed} neural network as a function of the networks' parameters $
\theta $ for \emph{different} input values $ x_i $ in a sequence to be fed into
the attention layer. Usually, there is no pre-processing of the data before it
goes into the attention layer, however stating the proposition this way will
allow us more flexibility, for example, we can apply it for two attention layers
one after another.

\begin{proof}[Proof of Proposition~\ref{prop:attention}]
	If there is are no $ f_i $'s to be applied to the data before entering the
	attention layer (as is usually the case), the argument becomes simpler, and
	we also include it as a simplified version of the general proof.

	In this simplified case, we only have to show that the attention layer 
	$ A: (W_k, W_q, W_v) \mapsto Attention(W_kX, W_qX, W_vX) $ for a fixed input
	$ X $ has the desired properties. Since matrix multiplication, the softmax
	function, and concatenation are analytic, then their composition is
	definitely a.e.\ $ \mathcal{C}^k $. 

	To show that $ \partial Z(F) $ has measure zero, we need the following lemma.

	\begin{lemma}[\bf The zero set of analytic functions has measure zero]
		\label{lem:measure_zero_set_analytic}
		Let $ f\colon \R^n \rightarrow \R^m $ be a non-zero real analytic function. Then the zero
		set of $f$, $Z(f)$, has measure zero.
	\end{lemma}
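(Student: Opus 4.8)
The plan is to reduce to the scalar case and then induct on the dimension $n$. Writing $f = (f_1, \ldots, f_m)$ with each $f_j\colon \R^n \to \R$ real analytic, non-triviality of $f$ gives a component $f_{j_0} \not\equiv 0$, and $Z(f) \subseteq Z(f_{j_0})$; since subsets of null sets are null, it suffices to treat a single non-zero real analytic $g\colon \R^n \to \R$. For the base case $n = 1$: if $Z(g)$ had an accumulation point, the identity theorem would force $g$ to vanish on the connected set $\R$, contradicting $g \not\equiv 0$; hence $Z(g)$ is discrete, at most countable, and therefore Lebesgue-null.

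For the inductive step, assume the claim in dimension $n-1$ and write points of $\R^n$ as $(x', t)$ with $x' \in \R^{n-1}$, $t \in \R$. For fixed $x'$ the slice $g_{x'}(t) := g(x', t)$ is real analytic on $\R$, so either $g_{x'} \equiv 0$ or, by the base case, $Z(g_{x'})$ has measure zero in $\R$. Let $B = \{x' \in \R^{n-1} : g_{x'} \equiv 0\}$. By Tonelli's theorem applied to the indicator function of $Z(g)$,
\[
    \lambda_n\bigl(Z(g)\bigr) \;=\; \int_{\R^{n-1}} \lambda_1\bigl(Z(g_{x'})\bigr)\, dx',
\]
and the integrand is $0$ for $x' \notin B$ and $+\infty$ for $x' \in B$; hence $\lambda_n(Z(g)) = 0$ as soon as $\lambda_{n-1}(B) = 0$.

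It remains to bound $B$. For each non-negative integer $k$ set $h_k(x') := \partial_t^k g(x', 0)$, a real analytic function on $\R^{n-1}$. Since each $g_{x'}$ is analytic on all of $\R$, it vanishes identically exactly when all its derivatives at $0$ vanish, so $B = \bigcap_{k \ge 0} Z(h_k)$. If every $h_k$ were identically zero then $B = \R^{n-1}$, i.e.\ $g \equiv 0$ — a contradiction; hence some $h_{k_0} \not\equiv 0$, and $B \subseteq Z(h_{k_0})$, which is null by the inductive hypothesis. Therefore $\lambda_{n-1}(B) = 0$, which closes the induction and proves the lemma. The only genuinely delicate point is this last step: one must identify the set of \emph{degenerate} slices $B$ and show it sits inside the zero set of a non-trivial analytic function of fewer variables, so that Tonelli's theorem and the induction hypothesis can be combined; everything else is a routine use of the identity theorem.
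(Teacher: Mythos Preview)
Your proof is correct and follows the same broad template as the paper's --- induction on $n$, the identity theorem for the base case, and a Fubini/Tonelli slicing argument for the step --- but the mechanics of the inductive step differ. The paper slices along the \emph{first} coordinate into $(n-1)$-dimensional hyperplanes, applies the induction hypothesis on those hyperplanes to get a non-null set $E\subset\R$ of first coordinates on which $f$ vanishes identically, and then invokes the one-dimensional identity theorem along each line $\R\times\{x_2\}\times\cdots\times\{x_n\}$ (using that $E$ has an accumulation point) to force $f\equiv 0$. You instead slice along the \emph{last} coordinate into one-dimensional lines, dispose of the nondegenerate slices via the base case, and then control the set $B$ of degenerate slices by trapping it inside $Z(h_{k_0})$ for a nontrivial Taylor coefficient $h_{k_0}(x')=\partial_t^{k_0}g(x',0)$, to which the induction hypothesis applies. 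Your route is a bit more explicit (it names the auxiliary analytic functions $h_k$ and makes the role of the induction hypothesis on $B$ transparent), while the paper's route avoids introducing the $h_k$'s at the cost of a second appeal to the identity theorem inside the inductive step; both are standard and equally valid.
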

	\begin{proof}
		The proof proceeds by induction on $n$. Let $ f \colon \R \rightarrow \R^m $ be a real
		analytic function. Suppose that $Z(f)$ is not a set having measure zero. Since it is not a
		null-set, it is an uncountable set, thus it has an accumulation
		point. From the identity theorem \citep[Corollary 1.2.7]{krantz2002} we
		get that $f \equiv 0$, contradicting our assumption that $f$ is non-zero.

		Suppose the lemma is established for real analytic functions defined on
		$\R^{n-1}$. Assume that $Z(f)$ is not a null-set. By Fubini's Theorem
		there is a set $E \subset \R$ not having measure zero such that for all $x
		\in E$ the set $Z(f)\cap (\{x\}\times \R^{n-1})$ does not have measure zero. By the
		induction hypothesis we conclude that $f = 0$ on each of those
		hyperplanes. Since $E$ is uncountable, it has an accumulation point
		$a \in \R$. Thus, there is a sequence of distinct points $a_k \mapsto a$ such
		that $f = 0$ on the hyperplanes with first coordinate in $\left\{ a_1, a_2, \cdots
		\right\} \cup \left\{ a \right\}$.

		Now take any line of the form $\R \times \{x_2\} \times \cdots \times \{x_n\}$,
		where $x_i \in \R$ are fixed numbers. Since $f$ is real analytic on this
		line and $f = 0$ on a set with an accumulation point on that line, we have
		$f = 0$ on that line. This was for true any line, hence $f \equiv 0$ on
		$\R^n$.
	\end{proof}

	With this lemma, we have that $ Z(F) $ has measure zero, since attention is
	analytic and non-constant, thus $ \partial Z(F) \subset Z(F) $ also has
	measure zero.

	For the general case, where we apply the $ f_i $'s to the input of the
	attention layer, we proceed as follows.

	Let $ S(f) = \bigcup_{i=1}^{m} S(f_i) $. We have that for any $ \theta \in
	\R^{n_\theta} $ and $ (W_k, W_q, W_v) $, the function $ F $ is $
	\mathcal{C}^k $ since $ Y $ is $ \mathcal{C}^k $ in a neighborhood of $
	\theta $. Hence, we have that $ S(F) \subset S(f) \times \R^{v \times
	n_{D-1}} \times \R^{v \times n_{D-1}} \times \R^{v \times n_{D-1}}$, that
	is, $ F $ is $ \mathcal{C}^k $ on a dense open subset of its domain.

	We write $ F $ as the following composition of functions
	\[ 
		(\theta, W_k, W_q, W_v) \xmapsto{f} (Y(\theta), W_k, W_q, W_v) \xmapsto{M}
		 (W_kY, W_qY, W_vY) \xmapsto{A}
		 F(\theta, W_k, W_q, W_v).
	\]
	
	Using the Lemma~\ref{lem:measure_zero_set_analytic}, we see that $ Z(A) $ has measure zero, where $ A $
	is the map $ (K, Q, V) \mapsto Attention(K, Q, V) $. Since $ M $, the matrix
	multiplication, is a submersion, we have that $ M^{-1}(Z(A)) $ also has
	measure zero. In fact, since the preimages are hyperplanes in the matrix
	space, we have that $ M^{-1}(Z(A)) \cap (\theta, \R^{v \times n_{D-1}}, W_q, W_v) $ 
	has measure zero and similarly for $ W_q $ and $ W_v $. This means that $ f^{-1} (M^{-1} (Z
	 (A))) \supset \partial Z(F) $ will also have measure zero. 
\end{proof}

\begin{remark}[\bf Any order for the layers]
	In Propositions~\ref{prop:conv} and~\ref{prop:attention} we have proven the
	assumptions needed for the induction step in the proof of
	Proposition~\ref{prop:chnn}. This means that our results, namely the almost
	everywhere $ k $-times differentiability of the network function hold for
	any combination of convolutional, attention, or fully connected layers.
\end{remark}

\section{Experimental Details}
\label{section:experiment}
The experiment described in Section~\ref{section:discussion} is as follows. We
have two data points: $ (0.9,0.9) $ and $ (2.5,2.5) $ which determine a linear
function. The model we use is the following: $ f_\theta(x) =
\text{ReLU}(\theta_2\text{ReLU}(\theta_1x))$. With the quadratic loss, we get
the following empirical loss function
\[ 
	L(\theta_1, \theta_2) = 3.53(1 -
	\text{ReLU}(\theta_2\text{ReLU}(\theta_1)))^2.
\]

Then, the global minima for this loss are the points $ \{ (\theta_1, \theta_2) |
\theta_1\theta_2 = 1 \text{ and }  \theta_1, \theta_2 >0 \} $, i.e.\ the first
quadrant part of the hyperbola in the plane. We denote by $ p $ the probability
of selecting the point $ (0.9,0.9) $ as a mini batch for SGD; by $ \eta $
we denote the step-size.

\citet{chemnitz2024} introduce two quantities, $ \mu $ and $ \lambda $ which
determine if a global minimum $ \theta^* $ is stable for GD and SGD,
respectively. More precisely, we have that $ \theta^* $ is stable for GD if $
\mu(\theta^*) < 0 $ and that it is stable for SGD if $ \lambda(\theta^*) < 0 $.
For the setting we have described above, the analytic form of $ \mu $ and $
\lambda $ is as follows
\begin{align}
	\mu(\theta) &= \log(|1 - \eta(p0.9^2 + (1-p)2.5^2)(\theta_1^2 + \theta_2^2) |),\\
	\lambda(\theta) &= p\log(|1 - \eta0.9^2 (\theta_1^2+\theta_2^2)|) + (1-p)\log (|1 - 2.5^2
	 (\theta_1^2 + \theta_2^2)|).
\end{align}

Figure~\ref{fig:gd_sgd} was obtained by setting the $ \eta = 0.15 $ and $ p =
0.5 $ for the figure on the left and $ \eta = 0.3 $ and $ p = 0.58 $ for the figure on
the right.

To obtain the left part of Figure~\ref{fig:bif}, we look at the polynomial $
P^k(x) = G_\eta^{(k)}(x) - x $, and use a numerical root finding algorithm to find
all the real roots. We determine if a periodic point of period $ k $, $ \theta^k
$, is stable by numerically computing the eigenvalues of the linearization of
the $ k $-times iterated GD map at $ \theta^k $, $ DG^{(k)}(\theta^k) $. If both
eigenvalues have absolute value smaller than 1, we know \citep{chemnitz2024}
that the period is stable; otherwise it is unstable.

The right part of Figure~\ref{fig:bif}, plots two trajectories of GD initialised
at $ (1.48, 1/1.48 + 0.1) $ with step-sizes $ \eta = 0.25 $ for the trajectory
converging to a minimum (violet) and $ \eta = 0.325 $ for the trajectory
converging to a periodic orbit of period $2$ (brown).

\newpage

\end{document}